\newtheorem{thm}{Theorem}[section]
\newtheorem{cor}[thm]{Corollary}
\newtheorem{lem}[thm]{Lemma}
\newtheorem{prop}[thm]{Proposition}
\theoremstyle{definition}
\theoremstyle{remark}
\numberwithin{equation}{section}
\newcommand{\R}{{\mathbb R}}
\newcommand{\N}{{\mathbb N}}
\newcommand{\be}[1]{\begin{equation}\label{#1}}
\newcommand{\ee}{\end{equation}}
\renewcommand{\(}{\left(}
\renewcommand{\)}{\right)}
\newcommand{\ird}[1]{\int_{\R^d}{#1}\,dx}
\newcommand{\nrm}[2]{\|{#1}\|_{#2}}
\newcommand{\irtwo}[1]{\int_{\R^2}{#1}\,dx}
\newcommand{\istwo}[1]{\int_0^{+\infty}{#1}\,r\,dr}
\newcommand{\pmin}{1}
\newcommand{\qmin}{1}
\newcommand{\LLL}{$\mathrm{LLL}$}
\begin{document}

\title[Interpolation inequalities and spectral estimates for magnetic operators]{Interpolation inequalities and spectral\\ estimates for magnetic operators}

\author[J.~Dolbeault]{Jean Dolbeault}
\address{CEREMADE (CNRS UMR n$^\circ$ 7534), PSL research university, Universit\'e Paris-Dauphine, Place de Lattre de Tassigny, 75775 Paris 16, France}
\email{dolbeaul@ceremade.dauphine.fr}

\author[M.J.~Esteban]{Maria J.~Esteban}
\address{CEREMADE (CNRS UMR n$^\circ$ 7534), PSL research university, Universit\'e Paris-Dauphine, Place de Lattre de Tassigny, 75775 Paris 16, France}
\email{esteban@ceremade.dauphine.fr}

\author[A.~Laptev]{Ari Laptev}
\address{Department of Mathematics, Imperial College London, Huxley Building, 180 Queen's Gate, London SW7 2AZ, UK,\\
and Department of Mathematics, Siberian Federal University, Russia}
\email{a.laptev@imperial.ac.uk}

\author[M.~Loss]{Michael Loss}
\address{School of Mathematics, Skiles Building, Georgia Institute of Technology, Atlanta GA 30332-0160, USA}
\email{loss@math.gatech.edu}

\thanks{\hspace*{0pt}Acknowledgements. This work has been done at the Institute Mittag-Leffler during the fall program \emph{Interactions between Partial Differential Equations \& Functional Inequalities}. J.D.~has been supported by the Projects \emph{STAB}, \emph{Kibord} and \emph{EFI} of the French National Research Agency (ANR). AL was supported by the grant of the Russian Federation Government for research, contract No. 14.Y26.31.0006. M.L. has been partially supported by NSF Grant DMS-1600560. The authors thank two anonymous referees who helped them clarifying the assumptions needed for the existence of optimal functions and for pointing references concerning the stability among radial solutions.\\
\copyright\,2017 by the authors. This paper may be reproduced, in its entirety, for non-commercial purposes.}

%%%%%%%%%%%%%%%%%%%%%%%%%%%%%%%%%%%%%%%%%%%%%%%%%%%%%%%%%%%%%%%%%%%%%%
\subjclass{Primary: 35P30, 26D10, 46E35; Secondary: 35J61, 35J10, 35Q40, 35Q55, 35B40, 46N50, 47N50, 47N50.}
\keywords{Magnetic Laplacian; magnetic Schr\"odinger operator; interpo\-la\-tion; Keller-Lieb-Thirring estimates; optimal constants; spectral gap; Gagli\-ardo-Ni\-renberg inequalities; logarithmic Sobolev inequalities}
\date{\today}
\begin{abstract}
We prove magnetic interpolation inequalities and Keller-Lieb-Thir\-ring estimates for the principal eigenvalue of magnetic Schr\"odinger operators. We establish explicit upper and lower bounds for the best constants and show by numerical methods that our theoretical estimates are accurate.
\end{abstract}
\maketitle
\thispagestyle{empty}
\vspace*{-0.5cm}
%%%%%%%%%%%%%%%%%%%%%%%%%%%%%%%%%%%%%%%%%%%%%%%%%%%%%%%%%%%%%%%%%%%%%%

%%%%%%%%%%%%%%%%%%%%%%%%%%%%%%%%%%%%%%%%%%%%%%%%%%%%%%%%%%%%%%%%%%%%%%
%%%%%%%%%%%%%%%%%%%%%%%%%%%%%%%%%%%%%%%%%%%%%%%%%%%%%%%%%%%%%%%%%%%%%%
\section{Introduction and main results}\label{introduction}

In dimensions $d=2$ and $d=3$, let us consider the magnetic Laplacian defined via a magnetic potential $\mathbf A$ by
\[
-\Delta_{\mathbf A}\,\psi=-\Delta\,\psi-\,2\,i\,\mathbf A\cdot\nabla\psi+|\mathbf A|^2\psi-\,i\,(\mbox{div}\,\mathbf A)\,\psi\,.
\]
The magnetic field is $\mathbf B=\mbox{curl}\,\mathbf A$. The quadratic form associated with $-\Delta_{\mathbf A}$ is given by
$\ird{|\nabla_{\!\mathbf A}\psi|^2}$ and well defined for all functions in the space
\[
\mathrm H^1_{\!\mathbf A}(\R^d):=\left\{\psi\in\mathrm L^2(\R^d)\,:\,\nabla_{\!\mathbf A}\psi\in\mathrm L^2(\R^d)\right\}
\]
where
\[
\nabla_{\!\mathbf A}:=\nabla+\,i\,\mathbf A\,.
\]
We shall consider the following spectral gap inequality
\be{Gap}
\nrm{\nabla_{\!\mathbf A}\psi}2^2\ge\Lambda[\mathbf B]\,\nrm\psi2^2\quad\forall\,\psi\in\mathrm H^1_{\!\mathbf A}(\R^d)\,.
\ee
Let us notice that $\Lambda$ depends only on $\mathbf B=\mbox{curl}\,\mathbf A$. Throughout this paper, we shall assume that there is equality in~\eqref{Gap} for some function in $\mathrm H^1_{\!\mathbf A}(\R^d)$. If $\mathbf B$ is a constant magnetic field, we recall that $\Lambda[\mathbf B]=|\mathbf B|$. If $d=2$, the spectrum of $-\Delta_{\mathbf A}$ is the countable set $\{(2j+1)\,|\mathbf B|\,:\,j\in\N\}$, the eigenspaces are of infinity dimension and called the \emph{Landau levels}. The eigenspace corresponding to the lowest level ($j=0$) is called the \emph{Lowest Landau Level} and will be considered in Section~\ref{Sec:Asymptotic}.

Let us denote the critical Sobolev exponent by $2^*=+\infty$ if $d=2$ and $2^*\kern-3pt=6$ if $d=3$, and define the optimal Gagliardo-Nirenberg constant by
\be{defcp}
\mathsf C_p:=\left\{\begin{array}{cc}
\min_{u\in\mathrm H^1(\R^d)\setminus\{0\}}\frac{\nrm{\nabla u}2^2+\nrm u2^2}{\nrm up^2}\quad&\mbox{if}\quad p\in(2,2^*)\,,\\[6pt]
\min_{u\in\mathrm H^1(\R^d)\setminus\{0\}}\frac{\nrm{\nabla u}2^2+\nrm up^2}{\nrm u2^2}\quad&\mbox{if}\quad p\in(1,2)\,.
\end{array}\right.
\ee
The first purpose of this paper is to establish interpolation inequalities in the presence of a magnetic field. With $\mathbf A$ and $\mathbf B=\mbox{curl}\,\mathbf A$ as above, such that~\eqref{Gap} holds, let us consider the \emph{magnetic interpolation inequalities}
\be{Interp1}
\nrm{\nabla_{\!\mathbf A}\psi}2^2+\alpha\,\nrm\psi2^2\ge\mu_{\mathbf B}(\alpha)\,\nrm\psi p^2\quad\forall\,\psi\in\mathrm H^1_{\!\mathbf A}(\R^d)
\ee
for any $\alpha\in(-\Lambda[\mathbf B],+\infty)$ and any $p\in(2,2^*)$,
\be{Interp2}
\nrm{\nabla_{\!\mathbf A}\psi}2^2+\beta\,\nrm\psi p^2\ge\nu_{\mathbf B}(\beta)\,\nrm\psi2^2\quad\forall\,\psi\in\mathrm H^1_{\!\mathbf A}(\R^d)
\ee
for any $\beta\in(0,+\infty)$ and any $p\in(1,2)$ and, in the limit case corresponding to $p=2$,
\be{Interp3}
\nrm{\nabla_{\!\mathbf A}\psi}2^2\ge\gamma\ird{|\psi|^2\,\log\(\frac{|\psi|^2}{\nrm\psi2^2}\)}+\xi_{\mathbf B}(\gamma)\,\nrm\psi2^2\quad\forall\,\psi\in\mathrm H^1_{\!\mathbf A}(\R^d)
\ee
for any $\gamma\in(0,+\infty)$. Throughout this paper $\mu_{\mathbf B}(\alpha)$, $\nu_{\mathbf B}(\beta)$ and $\xi_{\mathbf B}(\gamma)$ denote the \emph{optimal constants} in, respectively,~\eqref{Interp1},~\eqref{Interp2} and~\eqref{Interp3}, considered as functions of the parameters $\alpha$, $\beta$ and $\gamma$. We observe that $\mu_{\mathbf 0}(1)=\mathsf C_p$ if $p\in(2,2^*)$, $\nu_{\mathbf 0}(1)=\mathsf C_p$ if $p\in(1,2)$ and $\xi_{\mathbf 0}(\gamma)=\gamma\,\log\big(\pi\,e^2/\gamma\big)$ if $p=2$ (which is the classical constant in the Euclidean logarithmic Sobolev inequality: see~\eqref{LS}). We shall assume that the magnetic potential $\mathbf A\in\mathrm L^2_{\mathrm{loc}}(\R^d)$ satisfies the technical assumption
\be{hyppp}
\begin{array}{l}
\displaystyle\lim_{\sigma\to+\infty}\sigma^{d-2}\ird{|\mathbf A(x)|^2\,e^{-\sigma\,|x|}}=0\quad\mbox{if}\quad p\in(2,2^*)\,,\\[8pt]
\displaystyle\lim_{\sigma\to+\infty}\frac{\sigma^{\frac d2-1}}{\log\sigma}\ird{|\mathbf A(x)|^2\,e^{-\sigma\,|x|^2}}=0\quad\mbox{if}\quad p=2\,,\\[8pt]
\displaystyle\lim_{\sigma\to+\infty}\sigma^{d-2}\int_{|x|<1/\sigma}|\mathbf A(x)|^2\,dx\quad\mbox{if}\quad p\in(1,2)\,.
\end{array}
\ee
%---------------------------------------------------------------------
\begin{thm}\label{Thm:Interp} Assume that $d=2$ or $3$, $p\in(1,2)\cup(2,2^*)$, and $\alpha>2$ if $d=2$ or $\alpha=3$ if $d=3$. Let $\mathbf A\in\mathrm L^\alpha_{\mathrm{loc}}(\R^d)$ be a magnetic potential satisfying~\eqref{hyppp} and $\mathbf B=\mbox{curl}\,\mathbf A$ be a magnetic field on $\R^d$ such that~\eqref{Gap} holds for some $\Lambda=\Lambda[\mathbf B]>0$ and equality is achieved in~\eqref{Gap} for some function $\psi\in\mathrm H^1_{\mathbf A}(\R^d)$. Then, the following properties hold:
\begin{enumerate}
\item[(i)] For any $p\in(2,2^*)$, the function $\mu_{\mathbf B}:(-\Lambda,+\infty)\to(0,+\infty)$ is monotone increasing, concave and such that
\[
\lim_{\alpha\to(-\Lambda)_+}\mu_{\mathbf B}(\alpha)=0\quad\mbox{and}\quad\lim_{\alpha\to+\infty}\mu_{\mathbf B}(\alpha)\,\alpha^{\frac{d-2}2-\frac dp}=\mathsf C_p\,.
\]
\item[(ii)] For any $p\in(1,2)$, the function $\nu_{\mathbf B}:(0,+\infty)\to(\Lambda,+\infty)$ is monotone increasing, concave and such that
\[
\lim_{\beta\to0_+}\nu_{\mathbf B}(\beta)=\Lambda\quad\mbox{and}\quad\lim_{\beta\to+\infty}\nu_{\mathbf B}(\beta)\,\beta^{-\frac{2\,p}{2\,p+d\,(2-p)}}=\mathsf C_p\,.
\]
\item[(iii)] The function $\xi_{\mathbf B}:[0,+\infty)\to\R$ is continuous, concave, such that $\xi_{\mathbf B}(0)=\Lambda[\mathbf B]$ and
\[
\xi_{\mathbf B}(\gamma)=\tfrac d2\,\gamma\,\log\big(\tfrac{\pi\,e^2}\gamma\big)(1+o(1))\quad\mbox{as}\quad\gamma\to+\infty\,.
\]
\end{enumerate}
\end{thm}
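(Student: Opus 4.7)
\smallskip
\noindent\textbf{Proof plan.} Each of $\mu_{\mathbf B}(\alpha)$, $\nu_{\mathbf B}(\beta)$, $\xi_{\mathbf B}(\gamma)$ is defined as a Rayleigh-quotient infimum that depends affinely on the parameter; for example
\[
\mu_{\mathbf B}(\alpha)=\inf_{\psi\ne 0}\frac{\nrm{\nabla_{\!\mathbf A}\psi}2^2+\alpha\,\nrm\psi2^2}{\nrm\psi p^2}.
\]
Since an infimum of affine functions is concave, concavity is automatic in all three statements; it also yields continuity on each open interval. Monotonicity of $\mu_{\mathbf B}$ and $\nu_{\mathbf B}$ is immediate from the positive sign of the $\nrm\psi2^2$ (resp.\ $\nrm\psi p^2$) coefficient, and the identity $\xi_{\mathbf B}(0)=\Lambda[\mathbf B]$ is nothing but~\eqref{Gap}. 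Thus the qualitative parts of (i)--(iii) are free.

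\smallskip
\noindent\textbf{Trivial-side boundary values.} I plug the assumed gap optimizer $\psi_0\in\mathrm H^1_{\!\mathbf A}(\R^d)$ from~\eqref{Gap} in as a test function. For $p\in(2,2^*)$, $\mu_{\mathbf B}(\alpha)\le(\Lambda+\alpha)\,\nrm{\psi_0}2^2/\nrm{\psi_0}p^2\to 0$ as $\alpha\to-\Lambda_+$, while the lower bound $\mu_{\mathbf B}(\alpha)\ge 0$ is immediate from~\eqref{Gap}. For $p\in(1,2)$, the same $\psi_0$ gives $\nu_{\mathbf B}(\beta)\le\Lambda+O(\beta)\to\Lambda$, and $\nu_{\mathbf B}(\beta)\ge\Lambda$ again follows from~\eqref{Gap}. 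Here I need $\psi_0\in\mathrm L^p$, which is provided by local regularity (thanks to $\mathbf A\in\mathrm L^\alpha_{\mathrm{loc}}$) combined with Agmon-type decay of ground states of $-\Delta_{\!\mathbf A}$.

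\smallskip
\noindent\textbf{Semiclassical limits and main obstacle.} For the right-endpoint limits I rescale: $\psi_\sigma(x)=\sigma^{d/2}u(\sigma x)$ when $p\ne 2$ and $\psi_\sigma(x)=\sigma^{d/4}u(\sigma^{1/2}x)$ when $p=2$, where $u$ is a real-valued optimizer of the corresponding non-magnetic inequality: an Aubin--Talenti profile with exponential decay when $p\in(2,2^*)$, a Gaussian when $p=2$, a compactly supported ground state when $p\in(1,2)$. Realness of $u$ kills the imaginary cross term, so
\[
\nrm{\nabla_{\!\mathbf A}\psi_\sigma}2^2=\sigma^a\,\nrm{\nabla u}2^2+\sigma^d\ird{|\mathbf A(x)|^2\,|u(\sigma x)|^2},
\]
with $a\in\{1,2\}$ according to the scaling. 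Inserting this test function into \eqref{Interp1}--\eqref{Interp3} and optimizing $\sigma$ against the parameter produces the stated upper bounds $\mathsf C_p$ for $\mu_{\mathbf B}$ and $\nu_{\mathbf B}$, and the asymptotic $(d/2)\,\gamma\,\log(\pi e^2/\gamma)$ for $\xi_{\mathbf B}$. The matching lower bounds come for free from the diamagnetic inequality $|\nabla_{\!\mathbf A}\psi|\ge|\nabla|\psi||$, which reduces each magnetic inequality to its $\mathbf A=0$ counterpart with sharp constant~\eqref{defcp} (or the classical Euclidean logarithmic Sobolev inequality). The technical heart of the argument, and where I expect the real obstacle, is the estimate $\sigma^d\ird{|\mathbf A|^2\,|u(\sigma x)|^2}=o(\sigma^a)$: after substituting the explicit exponential, Gaussian, or compact-support decay of $u$, this requirement translates line by line into the three clauses of hypothesis~\eqref{hyppp}, and keeping track of the right powers of $\sigma$ in each of the three regimes is the only place where the proof needs to do careful work.
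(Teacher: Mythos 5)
Your proposal follows essentially the same route as the paper's own proof: concavity from the infimum-of-affine-functions structure, the gap optimizer $\psi_0$ for the $\alpha\to(-\Lambda)_+$ and $\beta\to0_+$ boundary values, a rescaled real optimizer of the non-magnetic inequality for the upper bound in the large-parameter asymptotics (with the cross term $2\,\mathrm{Re}\int i\,\mathbf A\cdot\nabla u\,\overline u$ vanishing by realness of $u$; the paper instead bounds this term crudely via Cauchy--Schwarz, which is a harmless difference), and the diamagnetic inequality together with the sharp non-magnetic constant for the matching lower bound. You also correctly locate the only real work in checking that the $\sigma^d\ird{|\mathbf A|^2\,|u(\sigma x)|^2}$ error is negligible relative to $\sigma^2\nrm{\nabla u}2^2$ (resp.\ $\sigma\nrm{\nabla u}2^2$ for the log-Sobolev scaling), and that this is exactly what the three clauses of \eqref{hyppp} guarantee once the exponential, Gaussian, or compact-support decay of $u$ is substituted.

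The one place you are too quick is in declaring the ``qualitative parts free'': monotonicity and concavity are indeed free, but the statement that $\mu_{\mathbf B}$ takes values in $(0,+\infty)$ (i.e.\ strict positivity on the whole range $(-\Lambda,+\infty)$, including $\alpha\in(-\Lambda,0)$) does not follow from the diamagnetic inequality alone, which only gives a positive bound for $\alpha\ge0$. The paper handles this via the refined estimate of Proposition~\ref{T1-gen}, which interpolates between \eqref{Gap} and the diamagnetic inequality with a parameter $t$ and optimizes; equivalently, one can argue that a nonnegative concave function on $(-\Lambda,+\infty)$ with boundary limit $0$ and diverging at $+\infty$ cannot vanish at an interior point (if $\mu_{\mathbf B}(\alpha_0)=0$, concavity would force $\mu_{\mathbf B}\equiv0$ to the right of $\alpha_0$, contradicting the $\alpha\to+\infty$ asymptotics). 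Either closes the gap, but you should say something, and likewise for $\nu_{\mathbf B}(\beta)>\Lambda$ for $\beta>0$. Beyond that, the appeal to Agmon-type decay to ensure $\psi_0\in\mathrm L^p$ for $p<2$ is the right ingredient and is at the same level of detail as the paper.
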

%---------------------------------------------------------------------
Equality is achieved in~\eqref{Interp1},~\eqref{Interp2} and~\eqref{Interp3} for some $\psi\in\mathrm H^1_{\!\mathbf A}(\R^d)$ in the case of constant magnetic fields. In the case of nonconstant magnetic fields, there are cases where one can prove the existence of some $\psi\in\mathrm H^1_{\!\mathbf A}(\R^d)$ for which equality is achieved in~\eqref{Interp1},~\eqref{Interp2} and~\eqref{Interp3}, but general sufficient conditions are difficult to obtain. Some answers to this question can be found in~\cite[Section~4]{MR1034014} and in~\cite{Kurata-2000}.

The main result of this paper is to establish lower bounds for the \emph{optimal constants} $\mu_{\mathbf B}$, $\nu_{\mathbf B}$ and~$\xi_{\mathbf B}$ in the case of general magnetic fields (respectively in Propositions~\ref{T1-gen},~\ref{T1-gen-qsmall} and in Section~\ref{ProofOfTheorem1}) and in the case of two-dimensional constant magnetic fields (respectively in Propositions~\ref{T1-gen-bis},~\ref{T1-gen-pless23} and~\ref{Prop:LS}). Upper estimates, theoretical and numerical, are also given in Section~\ref{Sec:Numerics}.

\medskip The magnetic interpolation inequalities have interesting applications to optimal spectral estimates for the magnetic Schr\"odinger operators
\[
-\Delta_{\mathbf A}+\phi\,.
\]
Let us denote by $\lambda_{\mathbf A,\phi}$ its principal eigenvalue, and by $\alpha_{\mathbf B}:(0,+\infty)\to(-\Lambda,+\infty)$ the inverse function of $\alpha\mapsto\mu_{\mathbf B}(\alpha)$. We denote by $\phi_-:=(\phi-|\phi|)/2$ the negative part of~$\phi$. By duality as we shall see in Section~\ref{Sec:proofs}, Theorem~\ref{Thm:Interp} has a counterpart, which is a result on \emph{magnetic Keller-Lieb-Thirring estimates}.
%---------------------------------------------------------------------
\begin{cor}\label{Cor:KLT1} With these notations, let us assume that $\mathbf A$ satisfies the same hypotheses as in Theorem~\ref{Thm:Interp}. Then we have:
\begin{enumerate}
\item[(i)] For any $q=p/(p-2)\in(d/2,+\infty)$ and any potential $V$ such that $V_-\in\mathrm L^q(\R^d)$,
\be{KLT1}
\lambda_{\mathbf A,V}\ge-\,\alpha_{\mathbf B}(\nrm{V_-}q)\,.
\ee
The function $\alpha_{\mathbf B}$ satisfies
\[
\lim_{\mu\to0_+}\alpha_{\mathbf B}(\mu)=\Lambda\quad\mbox{and}\quad\lim_{\mu\to+\infty}\alpha_{\mathbf B}(\mu)\,\mu^\frac{2\,(q+1)}{d-2-2\,q}=-\,\mathsf C_p^\frac{2\,(q+1)}{d-2-2\,q}\,.
\]
\item[(ii)] For any $q=p/(2-p)\in(1,+\infty)$ and any potential $W\ge 0$ such that $W^{-1}\in\mathrm L^q(\R^d)$,
\be{KLT2}
\lambda_{\mathbf A,W}\ge\nu_{\mathbf B}\(\nrm{W^{-1}}q^{-1}\)\,.
\ee
\item[(iii)] For any $\gamma>0$ and any potential $W\ge 0$ such that $e^{-W/\gamma}\in\mathrm L^1(\R^d)$,
\be{KLT3}
\lambda_{\mathbf A,W}\ge\xi_{\mathbf B}\(\gamma\)-\gamma\,\log\(\textstyle\ird{e^{-W/\gamma}}\)\,.
\ee
\end{enumerate}
Moreover equality is achieved in~\eqref{KLT1},~\eqref{KLT2} and~\eqref{KLT3} if and only if equality is achieved in~\eqref{Interp1},~\eqref{Interp2} and~\eqref{Interp3}.\end{cor}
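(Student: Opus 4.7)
All three assertions will follow by convex duality from the interpolation inequalities of Theorem~\ref{Thm:Interp}. In each case I start from the variational characterisation of the principal eigenvalue,
\[
\lambda_{\mathbf A,\phi}=\inf_{\psi\in\mathrm H^1_{\!\mathbf A}(\R^d)\setminus\{0\}}\frac{\nrm{\nabla_{\!\mathbf A}\psi}2^2+\ird{\phi\,|\psi|^2}}{\nrm\psi2^2},
\]
so that it is enough, for each admissible $\psi$, to bound the numerator from below by the advertised multiple of $\nrm\psi2^2$.

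For~(i), I would first use $V\ge-V_-$ and H\"older's inequality with conjugate exponents $q=p/(p-2)$ and $p/2$ to obtain
\[
\ird{V_-\,|\psi|^2}\le\nrm{V_-}q\,\nrm\psi p^2,
\]
so the task reduces to bounding $\nrm{\nabla_{\!\mathbf A}\psi}2^2-\nrm{V_-}q\,\nrm\psi p^2$ from below. Applying~\eqref{Interp1} with the choice $\alpha=\alpha_{\mathbf B}(\nrm{V_-}q)$, which by definition satisfies $\mu_{\mathbf B}(\alpha)=\nrm{V_-}q$, cancels the $\nrm\psi p^2$ terms and yields~\eqref{KLT1}. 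The monotonicity of $\mu_{\mathbf B}$ asserted in Theorem~\ref{Thm:Interp}~(i) ensures that $\alpha_{\mathbf B}$ is well defined, and its limiting behaviour is obtained by directly inverting the two asymptotics for $\mu_{\mathbf B}$ stated in that theorem.

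For~(ii), I would write $|\psi|^p=(W\,|\psi|^2)^{p/2}\,W^{-p/2}$ and apply H\"older with exponents $2/p$ and $2/(2-p)$, which gives
\[
\nrm\psi p^2\le\nrm{W^{-1}}q\ird{W\,|\psi|^2},\qquad q=\tfrac{p}{2-p},
\]
and then invoke~\eqref{Interp2} with $\beta=\nrm{W^{-1}}q^{-1}$. For~(iii), the r\^ole played by H\"older is taken by the Gibbs inequality: setting $Z=\ird{e^{-W/\gamma}}$ and comparing the probability densities $|\psi|^2/\nrm\psi2^2$ and $e^{-W/\gamma}/Z$ via the nonnegativity of the relative entropy gives
\[
\ird{W\,|\psi|^2}\ge-\,\gamma\ird{|\psi|^2\log\(\tfrac{|\psi|^2}{\nrm\psi2^2}\)}-\gamma\,\nrm\psi2^2\,\log Z,
\]
after which~\eqref{Interp3} produces~\eqref{KLT3}.

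The equality case is then read off by tracking sharpness at each step: equality in H\"older forces $V_-$ to be a suitable multiple of $|\psi|^{p-2}$ (resp.\ $W\,|\psi|^2$ a multiple of $|\psi|^p$), while equality in Gibbs forces $W/\gamma+\log|\psi|^2$ to be constant on $\{\psi\neq0\}$; combined with equality in the corresponding inequality of Theorem~\ref{Thm:Interp}, this yields a common optimiser, and the converse is immediate. The only genuine care needed---rather than a real obstacle---is to verify that the function saturating the H\"older/Gibbs step can be taken to coincide with the optimiser of~\eqref{Interp1}--\eqref{Interp3}, so that the two inequalities are saturated simultaneously by the same $\psi$.
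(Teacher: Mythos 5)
Your proposal is correct and follows essentially the same line as the paper's proof: H\"older's inequality with conjugate exponents $q$ and $p/2$ (resp.\ $2/p$ and $2/(2-p)$) to control the potential term in cases (i) and (ii), then Theorem~\ref{Thm:Interp} applied at the matched parameter value; and for (iii) the Gibbs/relative-entropy inequality, which is exactly what the paper's minimisation of $\mathcal F[\psi,W]$ over $W$ at fixed $\psi$ amounts to. The equality discussion also matches, tracking saturation of H\"older (resp.\ Gibbs) together with saturation of the corresponding interpolation inequality by a common $\psi$.
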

%---------------------------------------------------------------------
For general potentials changing sign, a more general estimate is proved in Proposition~\ref{Prop:keller-Case3}. A first result without magnetic field was obtained by Keller in the one-dimensional case in~\cite{MR0121101}, before being rediscovered and extended to the sum of all negative eigenvalues in any dimension by Lieb and Thirring in~\cite{Lieb-Thirring76}. In the meantime, an estimate similar to~\eqref{KLT3} was established in~\cite{Federbush} which, by duality, provides a proof of the logarithmic Sobolev inequality given by Gross in~\cite{Gross75}. In the Euclidean framework without magnetic fields, scalings provide a scale invariant form of the inequality, which is stronger (see~\cite{MR479373,MR3493423}) but was already known as the Blachmann-Stam inequality and goes back at least to~\cite{MR0109101}: see~\cite{Villani2008,MR3255069} for an historical account. Many papers have been devoted to the issue of estimating the optimal constants for the so-called Lieb-Thirring inequalities: see for instance~\cite{MR1708811,MR2253013,MR2443931} for estimates on the Euclidean space,~\cite{MR3218815,Dolbeault2013437} in the case of compact manifolds, and~\cite{MR3570296} for non-compact manifolds (infinite cylinders). As far as we know, no systematic study as in Theorem~\ref{Thm:Interp} nor as in Corollary~\ref{Cor:KLT1} has been done so far in the presence of a magnetic field, although many partial results have been previously obtained using, \emph{e.g.}, the diamagnetic inequality.

Section~\ref{Sec:proofs} is devoted to the duality between Theorem~\ref{Thm:Interp} and Corollary~\ref{Cor:KLT1}. Most of our paper is devoted to estimates of the best constants in~\eqref{Interp1},~\eqref{Interp2} and~\eqref{Interp3}, which also provide estimates of the best constants in~\eqref{KLT1},~\eqref{KLT2} and~\eqref{KLT3}. In Section~\ref{general-magnetic-ineq} we prove lower estimates in the case of a general magnetic field and establish Theorem~\ref{Thm:Interp}. Sharper estimates are obtained in Section~\ref{constant-magnetic-ineq} for a constant magnetic field in dimension two. Section~\ref{Sec:Numerics} is devoted to upper bounds and the numerical computation of various upper and lower bounds (constant magnetic field, dimension two). Our theoretical estimates are remarkably accurate for the values of $p$ and $d$ that we have considered numerically, using radial functions. This is why we conclude this paper by a numerical investigation of the stability of a radial optimal function.

%%%%%%%%%%%%%%%%%%%%%%%%%%%%%%%%%%%%%%%%%%%%%%%%%%%%%%%%%%%%%%%%%%%%%%
%%%%%%%%%%%%%%%%%%%%%%%%%%%%%%%%%%%%%%%%%%%%%%%%%%%%%%%%%%%%%%%%%%%%%%
\section{Magnetic interpolation inequalities and Keller-Lieb-Thirring inequalities: duality and a generalization}\label{Sec:proofs}

Let us prove Corollary~\ref{Cor:KLT1} as a consequence of Theorem~\ref{Thm:Interp}. Details on \emph{duality} will be provided in the proof and in the subsequent comments.

\begin{proof}[Proof of Corollary~\ref{Cor:KLT1}] Consider first Case (i) with $q>d/2$. Using the definition of the negative part of $V$ and H\"older's inequality with $1/q+2/p=1$, we know that
\begin{eqnarray}\label{SchrHolder}
\ird{|\nabla_{\!\mathbf A}\psi|^2}+\ird{V\,|\psi|^2}&\ge&\ird{|\nabla_{\!\mathbf A}\psi|^2}+\ird{V_-\,|\psi|^2}\\
&\ge&\nrm{\nabla_{\!\mathbf A}\psi}2^2-\nrm {V_-}q\,\nrm\psi p^2\ge-\,\alpha_{\mathbf B}(\nrm {V_-}q)\,{\nrm\psi 2^2}\,,\nonumber
\end{eqnarray}
because, by Theorem~\ref{Thm:Interp}, $\mu_{\mathbf B}(\alpha)=\nrm {V_-}q$ has a unique solution $\alpha=\alpha_{\mathbf B}(\nrm {V_-}q)$. This proves~\eqref{KLT1}. The optimality in~\eqref{KLT1} is equivalent to the optimality in~\eqref{Interp1} because $V=-\,|\psi|^{p-2}$ realizes the equality in H\"older's inequality.

In Case (ii), by H\"older's inequality with exponents $2/(2-p)$ and $2/p$,
\[
\nrm\psi p^2=\(\ird{W^{-\frac p2}\,\(W\,|\psi|^2\)^\frac p2}\)^{2/p}\le\nrm{W^{-1}}q\,\ird{W\,|\psi|^2}
\]
with $q=p/(2-p)$, we know using~\eqref{Interp2} that
\[
\ird{|\nabla_{\!\mathbf A}\psi|^2}+\ird{W\,|\psi|^2}\ge\ird{|\nabla_{\!\mathbf A}\psi|^2}+\beta\,\nrm\psi p^2\ge\nu_{\mathbf B}(\beta)\,\ird{|\psi|^2}\,.
\]
with $\beta=1/\nrm{{W}^{-1}}q$, which proves~\eqref{KLT2}.

In Case (iii), let us consider
\[
\mathcal F[\psi,W]:=\ird{|\nabla_{\!\mathbf A}\psi|^2}+\ird{W\,|\psi|^2}+\gamma\,\log\(\ird{e^{-W/\gamma}}\)-\,\xi_{\mathbf B}(\gamma)
\]
for a given function $\psi\in\mathrm H^1_{\!\mathbf A}(\R^d)$ such that $\nrm\psi2=1$ and mimimize this functional with respect to the potential $W$, so that
\[
|\psi|^2=\frac{e^{-W/\gamma}}{\ird{e^{-W/\gamma}}}
\]
which implies $W=W_\psi:=-\,\gamma\,\log|\psi|^2-\,\gamma\,\log\(\ird{e^{-W/\gamma}}\)$. Hence
\[
\mathcal F[\psi,W]\ge\mathcal F[\psi,W_\psi]=\ird{|\nabla_{\!\mathbf A}\psi|^2}-\gamma\ird{|\psi|^2\,\log\(|\psi|^2\)} -\,\xi_{\mathbf B}(\gamma)\ge0\,,
\]
where the last inequality is given by~\eqref{Interp3}. Minimizing $\mathcal F[\psi,W]$ with respect to~$W$ under the condition $\nrm\psi2=1$ establishes~\eqref{KLT3}. It is straightforward that the equality case is given by the equality case in~\eqref{Interp3} when there is a function $\psi$ for which this equality holds.
\end{proof}

In Case (iii) of Theorem~\ref{Thm:Interp} and Corollary~\ref{Cor:KLT1}, the \emph{duality} relation of~\eqref{Interp3} and~\eqref{KLT3} is a straightforward consequence of the convexity inequality
\[
x\,y+y\,\log y-y+e^{-x}\ge0\quad\forall\,(x,y)\in\R\times(0,+\infty)\,.
\]
A similar observation can be done in Cases (i) or (ii). If $q=p/(p-2)\in(d/2,+\infty)$, \emph{i.e.}, in Case (i), for an arbitrary negative potential~$V$ and an arbitrary function $\psi\in\mathrm H^1_{\!\mathbf A}(\R^d)$, we can rewrite~\eqref{SchrHolder} as
\[
\ird{|\nabla_{\!\mathbf A}\psi|^2}+\ird{V\,|\psi|^2}+\,\alpha_{\mathbf B}(\nrm Vq)\,{\nrm\psi 2^2}\ge0\,.
\]
By minimizing with respect to either $V$ or $\psi$, we reduce the inequality to~\eqref{Interp1} or~\eqref{KLT1}, and in both cases $V=-\,|\psi|^{p-2}$ is optimal. The two estimates are henceforth \emph{dual} of each other, which is reflected by the fact that $p/2$ and $q$ are H\"older conjugate exponents. Similarly in Case (ii), if $q=p/(2-p)\in(1,+\infty)$, we have
\[
\ird{|\nabla_{\!\mathbf A}\psi|^2}+\ird{W\,|\psi|^2}-\nu_{\mathbf B}(\beta)\,\ird{|\psi|^2}\ge0
\]
for any positive potential $W$ and any $\psi\in\mathrm H^1_{\!\mathbf A}(\R^d)$. Again a minimization with respect to either $W$ or $\psi$ reduces the inequality to~\eqref{Interp2} or~\eqref{KLT2}, which are also dual of each other. With these observations, it is clear that Theorem~\ref{Thm:Interp} can be proved as a consequence of Corollary~\ref{Cor:KLT1}: the two results are actually equivalent.

\medskip The restriction to a negative potential $V$ or to its negative part (resp.~to a positive potential $W$) is artificial in the sense that we can put the threshold at an arbitrary level $\lambda$. Let us consider a general potential $\phi$ on $\R^d$. We can first rewrite~\eqref{SchrHolder} in a more general setting as
\begin{multline*}
\ird{|\nabla_{\!\mathbf A}\psi|^2}+\ird{\phi\,|\psi|^2}\\
\ge\ird{|\nabla_{\!\mathbf A}\psi|^2}-\ird{(\lambda-\phi)_+\,|\psi|^2}+\lambda\ird{|\psi|^2}
\end{multline*}
with $\lambda\in\R$, $\mu=\nrm{(\lambda-\phi)}{q,+}$ and $q=p/(p-2)$. Here $\nrm u{q,+}$ is a new notation which stands for
\[
\nrm u{q,+}:=\(\int_{u>0}u^q\,dx\)^{1/q}\,.
\]
Using~\eqref{KLT1}, we know that
\[
\ird{|\nabla_{\!\mathbf A}\psi|^2}+\ird{\phi\,|\psi|^2}\ge-\(\alpha_{\mathbf B}(\mu)-\lambda\)\ird{|\psi|^2}\,.
\]
This makes sense of course if $\mu$ is finite and well defined which, for instance, requires that
\[
\lambda\le\lim_{R\to+\infty}\mathop{\mathrm{infess}}\displaylimits_{|x|>R}\,\phi(x)\,.
\]
A similar estimate holds in the range $p\in(\pmin,2)$. Let $\lambda\le\mathop{\mathrm{infess}}\displaylimits_{x\in\R^d}\,\phi(x)$. Then we have
\[
\nrm\psi p^2=\(\ird{(\phi-\lambda)^{-\frac p2}\,\((\phi-\lambda)\,|\psi|^2\)^\frac p2}\)^{2/p}\le\tfrac1\beta\,\ird{(\phi-\lambda)\,|\psi|^2}\,,
\]
with $1/\beta=\nrm{(\phi-\lambda)^{-1}}q$ and $q=p/(2-p)$. Using~\eqref{KLT2}, we know that
\[
\ird{|\nabla_{\!\mathbf A}\psi|^2}+\ird{\phi\,|\psi|^2}\ge\ird{|\nabla_{\!\mathbf A}\psi|^2}+\beta\,\nrm\psi p^2+\lambda\,\nrm\psi 2^2\ge\(\nu_{\mathbf B}(\beta)+\lambda\)\,\nrm\psi 2^2\,.
\]
We can collect these estimates in the following result.
%---------------------------------------------------------------------
\begin{prop}\label{Prop:keller-Case3} Let $d=2$ or $3$. Let $\phi\in\mathrm L^1_{\mathrm{loc}}(\R^d)$ be an arbitrary potential.
\begin{enumerate}
\item[(i)] If $q>d/2$, $p=\frac{2\,q}{q-1}$ and $\alpha_{\mathbf B}$ is defined as in~\eqref{KLT1}, we have
\[
\lambda_{\mathbf A,\phi}\ge-\(\alpha_{\mathbf B}\(\nrm{(\lambda-\phi)}{q,+}\)-\lambda\)\,.
\]
\item[(ii)] If $q\in(\qmin,+\infty)$, $p=\frac{2\,q}{q+1}$ and $\nu_{\mathbf B}$ defined as in~\eqref{KLT2}, we have
\[
\lambda_{\mathbf A,\phi}\ge\lambda+\nu_{\mathbf B}\(\nrm{(\phi-\lambda)^{-1}}q^{-1}\)\,.
\]
\end{enumerate}
These estimates hold for any $\lambda\in\R$ such that all above norms are well defined, with the additional condition that $\phi\ge\lambda$ a.e.~in Case (ii).\end{prop}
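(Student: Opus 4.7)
The plan is to reduce both cases of the proposition directly to Corollary~\ref{Cor:KLT1} by shifting the potential by $\lambda$ and then taking the infimum of the Rayleigh quotient to recover $\lambda_{\mathbf A,\phi}$. The common device in both parts is the trivial decomposition $\ird{\phi\,|\psi|^2}=\ird{(\phi-\lambda)\,|\psi|^2}+\lambda\,\nrm\psi2^2$, which splits off a constant spectral shift and leaves a sign-controlled residual potential to which the magnetic Keller-Lieb-Thirring estimates of the corollary can be applied verbatim. Essentially all the analytic content has been laid out in the paragraphs immediately preceding the statement; the remaining task is to assemble these pointwise inequalities and pass to the infimum.

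For Case (i) the plan is to use the pointwise bound $\phi\ge\lambda-(\lambda-\phi)_+$, yielding
\[
\ird{|\nabla_{\!\mathbf A}\psi|^2}+\ird{\phi\,|\psi|^2}\ge\ird{|\nabla_{\!\mathbf A}\psi|^2}-\ird{(\lambda-\phi)_+\,|\psi|^2}+\lambda\,\nrm\psi2^2
\]
for every $\psi\in\mathrm H^1_{\!\mathbf A}(\R^d)$. I would then apply~\eqref{KLT1} to the Schr\"odinger operator $-\Delta_{\mathbf A}+V$ with $V:=-(\lambda-\phi)_+\le 0$, so that $V_-=(\lambda-\phi)_+$ and $\nrm{V_-}q=\nrm{(\lambda-\phi)}{q,+}$. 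The hypothesis $q>d/2$ matches the admissible range in~\eqref{KLT1} via $p=\tfrac{2q}{q-1}\in(2,2^*)$, hence the lower bound $-\,\alpha_{\mathbf B}(\nrm{(\lambda-\phi)}{q,+})\,\nrm\psi2^2$ for the quadratic form of $-\Delta_{\mathbf A}+V$. Restoring the leftover $\lambda\,\nrm\psi2^2$ and infimizing the Rayleigh quotient over $\psi\in\mathrm H^1_{\!\mathbf A}(\R^d)\setminus\{0\}$ produces the stated bound.

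For Case (ii) the assumption $\phi\ge\lambda$ a.e.~makes $(\phi-\lambda)^{-1}$ pointwise meaningful. I would apply H\"older's inequality with exponents $2/(2-p)$ and $2/p$ to the factorization $|\psi|^p=(\phi-\lambda)^{-p/2}\,\bigl((\phi-\lambda)\,|\psi|^2\bigr)^{p/2}$ to obtain
\[
\beta\,\nrm\psi p^2\le\ird{(\phi-\lambda)\,|\psi|^2}\quad\mbox{with}\quad\beta:=\nrm{(\phi-\lambda)^{-1}}q^{-1},\;q=\tfrac{p}{2-p}.
\]
Inserting this and the identity above into~\eqref{Interp2} gives
\[
\ird{|\nabla_{\!\mathbf A}\psi|^2}+\ird{\phi\,|\psi|^2}\ge\ird{|\nabla_{\!\mathbf A}\psi|^2}+\beta\,\nrm\psi p^2+\lambda\,\nrm\psi2^2\ge\bigl(\nu_{\mathbf B}(\beta)+\lambda\bigr)\,\nrm\psi2^2,
\]
and infimizing the Rayleigh quotient concludes (ii).

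There is essentially no substantial obstacle here: the work reduces to bookkeeping that the integrability ranges and H\"older conjugacies line up with those of Corollary~\ref{Cor:KLT1}. The only point that deserves care is the precise meaning of the one-sided norm $\nrm{\cdot}{q,+}$ for sign-changing $\phi$ in Case (i) and the admissibility of $(\phi-\lambda)^{-1}$ in Case (ii); these are exactly the content of the caveat that all norms appearing in the statement must be well defined, together with the standing requirement $\phi\ge\lambda$ a.e.~in Case (ii).
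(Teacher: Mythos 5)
Your argument reproduces the paper's own proof: in Case (i) the pointwise bound $\phi\ge\lambda-(\lambda-\phi)_+$ plus~\eqref{KLT1} applied to $V=-(\lambda-\phi)_+$, and in Case (ii) the H\"older factorization $|\psi|^p=(\phi-\lambda)^{-p/2}\bigl((\phi-\lambda)\,|\psi|^2\bigr)^{p/2}$ followed by~\eqref{Interp2}, exactly as in the text preceding the proposition. The only cosmetic slip is the sign convention: with the paper's definition $\phi_-=(\phi-|\phi|)/2$ one gets $V_-=-(\lambda-\phi)_+$ rather than $(\lambda-\phi)_+$, but since only $\nrm{V_-}q=\nrm{(\lambda-\phi)}{q,+}$ enters, this is immaterial.
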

%---------------------------------------------------------------------
Notice that weaker conditions than $\phi\ge\lambda$ a.e.~can be given, like, for instance, $\inf_{\psi\in\mathrm H^1_{\!\mathbf A}(\R^d)}\int_{(\phi-\lambda)<0}\(|\nabla_{\!\mathbf A}\psi|^2+(\phi-\lambda)\,|\psi|^2\)\,dx\ge0$. Details are left to the reader. In Corollary~\ref{Cor:KLT1}, Case~(iii) does not involve a threshold at level $\lambda=0$ and one can notice that the estimate~\eqref {KLT3} is invariant under the transformation $\phi\mapsto\phi-\lambda$, $\lambda_{\mathbf A,\phi}\mapsto\lambda_{\mathbf A,\phi-\lambda}=\lambda_{\mathbf A,\phi}-\lambda$.

%%%%%%%%%%%%%%%%%%%%%%%%%%%%%%%%%%%%%%%%%%%%%%%%%%%%%%%%%%%%%%%%%%%%%%
%%%%%%%%%%%%%%%%%%%%%%%%%%%%%%%%%%%%%%%%%%%%%%%%%%%%%%%%%%%%%%%%%%%%%%
\section{Lower estimates: general magnetic field}\label{general-magnetic-ineq}

In this section, we consider a general magnetic field in dimension $d=2$ or $3$. We establish lower estimates of the best constants in~\eqref{Interp1},~\eqref{Interp2} and~\eqref{Interp3} before proving Theorem~\ref{Thm:Interp}.

%%%%%%%%%%%%%%%%%%%%%%%%%%%%%%%%%%%%%%%%%%%%%%%%%%%%%%%%%%%%%%%%%%%%%%
\subsection{Preliminaries: interpolation inequalities without magnetic field}\label{GN-without-magnetic}

Assume that $p>2$ and let $\mathsf C_p$ denote the optimal constant defined in~\eqref{defcp}, that is, the best constant in the Gagliardo-Nirenberg inequality
\be{UnScaledGN}
\nrm{\nabla u}2^2+\nrm u2^2\ge\mathsf C_p\,\nrm up^2\quad\forall\,u\in\mathrm H^1(\R^d)\,.
\ee
By scaling, if we test~\eqref{UnScaledGN} by $u\big(\cdot/\lambda\big)$, we find that
\be{ScaledGN}
\nrm{\nabla u}2^2+\lambda^2\,\nrm u2^2\ge\mathsf C_p\,\lambda^{2-\,d\,(1-\frac2p)}\,\nrm up^2\quad\forall\,u\in\mathrm H^1(\R^d)\quad\forall\,\lambda>0\,.
\ee
An optimization on $\lambda>0$ shows that the best constant in the scale-invariant inequality
\be{GN}
\nrm{\nabla u}2^{d\,(1-\frac2p)}\,\nrm u2^{2-d\,(1-\frac2p)}\ge\mathsf S_p\,\nrm up^2\,\quad\forall\,u\in\mathrm H^1(\R^d)
\ee
is given by
\be{defsp1}
\mathsf S_p=\tfrac1{2\,p}\,\(2\,p-d\,(p-2)\)^{1-d\,\frac{p-2}{2\,p}}\,\(d\,(p-2)\)^\frac{d\,(p-2)}{2\,p}\,\mathsf C_p\,.
\ee

Next, let us consider the case $p\in(1,2)$ and the corresponding Gagliardo-Nirenberg inequality
\be{UnScaledGN2}
\nrm{\nabla u}2^2+\nrm up^2\ge\mathsf C_p\,\nrm u2^2\quad\forall\,u\in\mathrm H^1(\R^d)\cap\mathrm L^p(\R^d)
\ee
where, compared to the case $p>2$, the positions of the norms $\nrm u2^2$ and $\nrm up^2$ have been exchanged. A scaling similar to the one of~\eqref{ScaledGN} shows that, for any $\lambda>0$,
\be{ScaledGN2}
\nrm{\nabla u}2^2+\lambda^{2+d\,\frac{2-p}p}\,\nrm up^2\ge\mathsf C_p\,\lambda^2\,\nrm u2^2\quad\forall\,u\in\mathrm H^1(\R^d)\cap\mathrm L^p(\R^d)\quad\forall\,\lambda>0\,.
\ee
By optimizing on $\lambda>0$, we obtain the scale-invariant inequality
\[
\nrm{\nabla u}2^\frac{d\,(2-p)}{d\,(2-p)+2\,p}\,\nrm u p^\frac{2\,p}{d\,(2-p)+2\,p}\ge\mathsf S_p^{1/2}\,\nrm u 2\quad\forall\,u\in\mathrm H^1(\R^d)\cap\mathrm L^p(\R^d)
\]
with
\[\label{defsp2}
\mathsf S_p=\tfrac1{d\,(2-p)+2\,p}\(2\,p\)^\frac{2\,p}{d\,(2-p)+2\,p}\,\(d\,(2-p)\)^\frac {d\,(2-p)}{d\,(2-p)+2\,p}\,\mathsf C_p\,.
\]
Optimal functions for~\eqref{UnScaledGN2} or~\eqref{ScaledGN2} have compact support according to, \emph{e.g.},~\cite{MR1938658,MR1387457,MR1629650,MR1715341}. See Section~\ref{Sec:EL} for more details.

The logarithmic Sobolev inequality corresponds to the limit case $p=2$. Let us consider~\eqref{ScaledGN} written with $\lambda^2=\frac1{p-2}$, \emph{i.e.},
\[
\nrm{\nabla\psi}2^2-\tfrac1{p-2}\(\nrm\psi p^2-\nrm\psi2^2\)\ge\,\left[\mathsf C_p\(\tfrac1{p-2}\)^{1-d\,\frac{p-2}{2\,p}}-\tfrac1{p-2}\right]\,\nrm\psi p^2\,.
\]
By passing to the limit as $p\to2$, we recover the Euclidean logarithmic Sobolev inequality with optimal constant in case $\gamma=1/2$. The general case corresponding to any $\gamma>0$, that is
\be{LS}
\nrm{\nabla\psi}2^2\ge\gamma\ird{\psi^2\,\log\(\frac{\psi^2}{\nrm\psi2^2}\)}+\tfrac d2\,\gamma\,\log\big(\tfrac{\pi\,e^2}\gamma\big)\,\nrm\psi2^2\quad\forall\,\psi\in\mathrm H^1(\R^d)\,,
\ee
follows by a simple scaling argument. It was proved in~\cite{MR1132315} that there is equality in the above inequality if and only if, up to a translation and a multiplication by a constant, $\psi(x)=e^{-\gamma\,|x|^2/4}$.

As a consequence, we obtain that the limit of $C_p$ as $p\to2_+$ is $1$ and
\[\label{LimLS}
\lim_{p\to2_+}\left[\mathsf C_p\(\tfrac1{p-2}\)^{1-d\,\frac{p-2}{2\,p}}-\tfrac1{p-2}\right]=\tfrac d4\,\log\big(\pi\,e^2\big)\,.
\]
In other words, this means that
\[
\mathsf C_p=1-\tfrac d{2\,p}\,(p-2)\,\log(p-2)+\,\tfrac d4\,\log\big(\pi\,e^2\big)\,(p-2)+o(p-2)\quad\mbox{as}\quad p\to2_+\,.
\]
Let $\varepsilon=p-2\to0_+$. We have shown that
\be{Cp2}
\mathsf C_p=1-\,\tfrac d4\,\varepsilon\,\log\varepsilon+\,\tfrac d4\,\varepsilon\,\log\big(\pi\,e^2\big)+o(\varepsilon)\,.
\ee

%%%%%%%%%%%%%%%%%%%%%%%%%%%%%%%%%%%%%%%%%%%%%%%%%%%%%%%%%%%%%%%%%%%%%%
\subsection{Case \texorpdfstring{$p\in(2,+\infty)$}{p in(2,infty)}}
Let
\[
\mu_{\mathrm{interp}}(\alpha):=\left\{\begin{array}{ll}
\mathsf S_p\,(\alpha+\Lambda)\,\Lambda^{-d\,\frac{p-2}{2\,p}}\quad&\mbox{if}\quad\alpha\in\left[-\Lambda,\tfrac{\Lambda\,(2\,p-d\,(p-2))}{d\,(p-2)}\right]\,,\\
\mathsf C_p\,\alpha^{1-d\,\frac{p-2}{2\,p}}\quad&\mbox{if}\quad\alpha\ge\tfrac{\Lambda\,(2\,p-d\,(p-2))}{d\,(p-2)}\,,
\end{array}\right.
\]
where $\mathsf C_p$ denotes the optimal constant in~\eqref{UnScaledGN} and $\mathsf S_p$ is given by~\eqref{defsp1}.
%---------------------------------------------------------------------
\begin{prop}\label{T1-gen} Let $d=2$ or $3$. Consider a magnetic field $\mathbf B$ with magnetic potential $\mathbf A$ and assume that~\eqref{Gap} holds for some $\Lambda=\Lambda[\mathbf B]>0$. For any $p\in(2,+\infty)$, any $\alpha>-\Lambda$, the function $\mu_{\mathbf B}(\alpha)$ defined in~\eqref{Interp1} satisfies
\[
\mu_{\mathbf B}(\alpha)\ge\mu_{\mathrm{interp}}(\alpha)\,.
\]\end{prop}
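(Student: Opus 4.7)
The plan is to combine three classical ingredients: the diamagnetic inequality $|\nabla_{\!\mathbf A}\psi|\ge\bigl|\nabla|\psi|\bigr|$, the spectral gap~\eqref{Gap}, and the scaled Gagliardo-Nirenberg inequality~\eqref{ScaledGN} applied to $u=|\psi|\in\mathrm H^1(\R^d)$ (which lies in $\mathrm H^1(\R^d)$ by the diamagnetic inequality, since $\nabla_{\!\mathbf A}\psi\in\mathrm L^2$). For any $t\in[0,1]$ I would split $\nrm{\nabla_{\!\mathbf A}\psi}2^2$ as $t\,\nrm{\nabla_{\!\mathbf A}\psi}2^2+(1-t)\,\nrm{\nabla_{\!\mathbf A}\psi}2^2$, bounding the first term from below by $t\,\nrm{\nabla|\psi|}2^2$ and the second by $(1-t)\,\Lambda\,\nrm\psi2^2$. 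Since $\alpha>-\Lambda$, for every $t\in(0,1+\alpha/\Lambda)$ the coefficient $(1-t)\,\Lambda+\alpha$ is positive, so setting $\lambda^2:=\bigl((1-t)\,\Lambda+\alpha\bigr)/t$ and applying~\eqref{ScaledGN} to $u=|\psi|$ yields
\[
\nrm{\nabla_{\!\mathbf A}\psi}2^2+\alpha\,\nrm\psi2^2\ge\mathsf C_p\,t^{\theta}\,\bigl((1-t)\,\Lambda+\alpha\bigr)^{1-\theta}\,\nrm\psi p^2\,,\qquad\theta:=\tfrac{d\,(p-2)}{2\,p}\,.
\]

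Next I would maximize $f(t):=t^{\theta}\bigl((1-t)\,\Lambda+\alpha\bigr)^{1-\theta}$ over $t\in(0,1]$. Logarithmic differentiation produces the unique interior critical point $t^{*}=\theta\,(\Lambda+\alpha)/\Lambda$, which lies in $(0,1]$ precisely when $\alpha\le\alpha^{*}:=\Lambda\,(2\,p-d\,(p-2))/(d\,(p-2))$. In the regime $\alpha\in(-\Lambda,\alpha^{*}]$, using the identity $(1-t^{*})\,\Lambda+\alpha=(1-\theta)(\Lambda+\alpha)$, one gets $f(t^{*})=\theta^{\theta}(1-\theta)^{1-\theta}\,\Lambda^{-\theta}\,(\Lambda+\alpha)$. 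In the complementary regime $\alpha\ge\alpha^{*}$, the maximum on $(0,1]$ is attained at the boundary $t=1$, which amounts to discarding the spectral gap altogether, and gives $f(1)=\alpha^{1-\theta}$.

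The last step is the algebraic identification of the prefactor in the first branch with $\mathsf S_p$: writing $\theta^{\theta}(1-\theta)^{1-\theta}=(d(p-2))^{\theta}\,(2p-d(p-2))^{1-\theta}/(2p)$ and comparing with~\eqref{defsp1} one reads off $\mathsf C_p\,\theta^{\theta}(1-\theta)^{1-\theta}=\mathsf S_p$. The two branches so obtained are exactly those appearing in the definition of $\mu_{\mathrm{interp}}$, and they match continuously at $\alpha=\alpha^{*}$ since $t^{*}=1$ there.

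The main obstacle is not conceptual but purely book-keeping: one has to identify the correct interpolation parameter $t$ between the diamagnetic and spectral-gap bounds, keep track of the exponent $\theta$ coming from the scaling in~\eqref{ScaledGN}, and check the algebraic identity that collapses the optimized constant to $\mathsf S_p$. It is worth emphasizing that this argument uses the magnetic potential $\mathbf A$ only through the value $\Lambda[\mathbf B]$ of the spectral gap, which is precisely why such a general lower bound takes the simple form announced, and why improving it will require, in Section~\ref{constant-magnetic-ineq}, finer information specific to constant magnetic fields.
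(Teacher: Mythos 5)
Your proof is correct and follows essentially the same route as the paper: the convex splitting of $\nrm{\nabla_{\!\mathbf A}\psi}2^2$, diamagnetic inequality on one piece, spectral gap on the other, scaled Gagliardo--Nirenberg on $|\psi|$, then a one-parameter optimization. The only cosmetic difference is that your interpolation parameter $t$ is $1-t$ in the paper's notation, so your $f(t)$ is the paper's objective reparametrized; the critical point, the threshold $\alpha^{*}$, and the algebraic reduction to $\mathsf S_p$ all agree.
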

%---------------------------------------------------------------------
\begin{proof} Let $t\in[0,1]$. From the diamagnetic inequality
\be{diamagnetic}
\nrm{\nabla|\psi|}2\le\nrm{\nabla_{\!\mathbf A}\psi}2
\ee
and from~\eqref{Gap} and~\eqref{ScaledGN} applied with $\lambda=\frac{\alpha+\Lambda\,t}{1-t}$, we deduce that
\begin{multline*}
\nrm{\nabla_{\!\mathbf A}\psi}2^2+\alpha\,\nrm\psi2^2\ge t\,\(\nrm{\nabla_{\!\mathbf A}\psi}2^2-\Lambda\,\nrm\psi2^2\)+(1-t)\,\(\nrm{\nabla|\psi|}2+\frac{\alpha+\Lambda\,t}{1-t}\,\nrm\psi2^2\)\\
\ge\mathsf C_p\,(1-t)^{\frac{d\,(p-2)}{2\,p}}\,(\alpha+t\,\Lambda)^{1-d\,\frac{p-2}{2\,p}}\,\nrm\psi p^2
\end{multline*}
for any $\psi\in\mathrm H^1_{\!\mathbf A}$. Finally we can optimize the quantity
\[
t\mapsto (1-t)^{\frac{d\,(p-2)}{2\,p}}\,(\alpha+t\,\Lambda)^{1-d\,\frac{p-2}{2\,p}}
\]
on the interval $t\in[\max\{0,-\alpha/\Lambda\},1]$. The optimum value in the interval $\(-\alpha/\Lambda,1\)$ is achieved for $t={1-d\,\frac{p-2}{2\,p}}-\frac{d\,\alpha\,(p-2)}{2\,\Lambda\,p}$, which proves the first inequality. For $\alpha\ge\tfrac{\Lambda\,(2\,p-d\,(p-2))}{d\,(p-2)}$, the maximum is achieved at $t=0$, which proves the second inequality.\end{proof}

By duality the estimates of Proposition~\ref{T1-gen} provide a lower estimate for the best constant in the Keller-Lieb-Thirring estimate~\eqref{KLT1}.
%---------------------------------------------------------------------
\begin{cor}\label{Cor:KLTT1} Under the assumptions of Proposition~\ref{T1-gen}, for any $q=p/(p-2)\in(d/2,+\infty)$ and any potential $V$ such that in $V_-\in\mathrm L^q(\R^d)$, we have
\[\begin{array}{ll}
\lambda_{\mathbf A,V}\ge\Lambda -\mathsf S_p^{-1}\,\Lambda^{\frac{d}{2\,q}}\,\nrm {V_-}q\quad&\mbox{if}\quad\nrm {V_-}q\in\left[0,\tfrac{2\,q}{d}\,\Lambda^{1-\frac{d}{2\,q}}\,\mathsf S_p\right]\,,\\
\lambda_{\mathbf A,V}\ge -\(\mathsf C_p^{-1}\,\nrm {V_-}q\)^\frac {2\,q}{2\,q-d}\quad&\mbox{if}\quad\nrm {V_-}q\ge\tfrac{2\,q}{d}\,\Lambda^{1-\frac{d}{2\,q}}\,\mathsf S_p\,.
\end{array}\]
\end{cor}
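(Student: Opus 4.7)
The plan is to deduce Corollary~\ref{Cor:KLTT1} from Proposition~\ref{T1-gen} by exactly the duality argument already recorded in Corollary~\ref{Cor:KLT1}. By part~(i) of Corollary~\ref{Cor:KLT1}, for any $V$ with $V_-\in\mathrm L^q(\R^d)$ and $q=p/(p-2)$ one has $\lambda_{\mathbf A,V}\ge-\,\alpha_{\mathbf B}(\nrm{V_-}q)$, where $\alpha_{\mathbf B}$ is the inverse of $\alpha\mapsto\mu_{\mathbf B}(\alpha)$. The bound $\mu_{\mathbf B}(\alpha)\ge\mu_{\mathrm{interp}}(\alpha)$ from Proposition~\ref{T1-gen}, together with the monotonicity of both functions, gives $\alpha_{\mathbf B}(\mu)\le\alpha_{\mathrm{interp}}(\mu)$ where $\alpha_{\mathrm{interp}}$ inverts $\mu_{\mathrm{interp}}$. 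Hence $\lambda_{\mathbf A,V}\ge-\,\alpha_{\mathrm{interp}}(\nrm{V_-}q)$, and it only remains to invert the explicit two-piece formula for $\mu_{\mathrm{interp}}$.

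The second step is the algebra of the inversion. With $q=p/(p-2)$ one has $(p-2)/p=1/q$, so $d\,(p-2)/(2\,p)=d/(2\,q)$ and $1-d\,(p-2)/(2\,p)=(2\,q-d)/(2\,q)$. On the first branch, solving $\mu=\mathsf S_p\,(\alpha+\Lambda)\,\Lambda^{-d/(2\,q)}$ yields $\alpha_{\mathrm{interp}}(\mu)=\mathsf S_p^{-1}\,\Lambda^{d/(2\,q)}\,\mu-\Lambda$, which after negation produces the affine lower bound stated in the corollary. On the second branch, solving $\mu=\mathsf C_p\,\alpha^{(2\,q-d)/(2\,q)}$ gives $\alpha_{\mathrm{interp}}(\mu)=(\mathsf C_p^{-1}\,\mu)^{2\,q/(2\,q-d)}$, which after negation produces the second bound.

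Finally, I would verify that the breakpoint in $\mu$ matches the one claimed in the statement. The breakpoint in $\alpha$ from Proposition~\ref{T1-gen} is $\Lambda\,(2\,p-d\,(p-2))/(d\,(p-2))$; substituting $p-2=p/q$ turns the numerator into $p\,(2\,q-d)/q$ and the denominator into $d\,p/q$, so the breakpoint simplifies to $\Lambda\,(2\,q-d)/d$. Plugging this into the affine branch yields $\mu=\mathsf S_p\,\Lambda\,(1+(2\,q-d)/d)\,\Lambda^{-d/(2\,q)}=\tfrac{2\,q}{d}\,\Lambda^{1-d/(2\,q)}\,\mathsf S_p$, which is precisely the threshold separating the two regimes in the corollary.

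There is no genuine obstacle beyond careful bookkeeping of the exponents when passing between $p$ and $q$; the monotonicity needed to invert the inequality is automatic from the fact that both $\mu_{\mathbf B}$ and $\mu_{\mathrm{interp}}$ are increasing on their respective domains, and the two branches match continuously at the threshold by construction in Proposition~\ref{T1-gen}.
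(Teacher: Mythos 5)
Your proposal is correct and follows exactly the same route as the paper: invert the two-piece lower bound on $\mu_{\mathbf B}$ from Proposition~\ref{T1-gen} to obtain a two-piece upper bound on $\alpha_{\mathbf B}$, then apply part~(i) of Corollary~\ref{Cor:KLT1}. The paper states the resulting estimate on $\alpha_{\mathbf B}$ without spelling out the algebra; you have simply made the substitution $p=2q/(q-1)$ and the breakpoint calculation explicit, and both check out.
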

%---------------------------------------------------------------------
\begin{proof} With $p=\frac{2\,q}{q-1}$, the estimates of Proposition~\ref{T1-gen} on $\alpha\mapsto\mu_{\mathbf B}(\alpha)$ provide estimates on its inverse $\mu\mapsto\alpha_{\mathbf B}(\mu)$ which go as follows:
\[\begin{array}{ll}
\alpha_{\mathbf B}(\mu)\le\mathsf S_p^{-1}\,\Lambda^{\tfrac{d}{2\,q}}\,\mu-\Lambda\quad&\mbox{if}\quad\mu\in\left[0,\tfrac{2\,q}{d}\,\Lambda^{1-\frac{d}{2\,q}}\,\mathsf S_p\right]\,,\\
\alpha_{\mathbf B}(\mu)\le\(\mathsf C_p^{-1}\,\mu\)^{\tfrac{2\,q}{2\,q-d}}\quad&\mbox{if}\quad\mu\ge\tfrac{2\,q}{d}\,\Lambda^{1-\frac{d}{2\,q}}\,\mathsf S_p\,.
\end{array}\]
The result is then a consequence of Corollary~\ref{Cor:KLT1}.\end{proof}

%%%%%%%%%%%%%%%%%%%%%%%%%%%%%%%%%%%%%%%%%%%%%%%%%%%%%%%%%%%%%%%%%%%%%%
\subsection{Further interpolation inequalities in case \texorpdfstring{$p\in(2,+\infty)$}{p in(2,infty)}}
Without magnetic field, Gagliardo-Nirenberg interpolation inequalities can be put in scale-invariant form~\eqref{GN} by optimizing~\eqref{ScaledGN} on the scale parameter \hbox{$\lambda>0$}. In the presence of a magnetic field, one may wonder if an inequality similar to~\eqref{GN} exists. The following result provides a positive answer.%---------------------------------------------------------------------
\begin{cor}\label{GNmag} Under the assumptions of~Proposition~\ref{T1-gen}, with $\Lambda=\Lambda[\mathbf B]$, for any $\theta\in[1-2/p,1)$ and any $\psi\in\mathrm H^1_{\!\mathbf A}(\R^d)$, we have
\begin{multline*}
\(\nrm{\nabla_{\!\mathbf A}\psi}2^2+\alpha\,\nrm\psi2^2\)^{\theta/2}\,\nrm\psi2^{1-\theta}\\
\ge\mu_{\mathrm{interp}}(\alpha)^{\frac14\,(p\,\theta-p+2)}\,\Big(\min\big\{1, (1+\tfrac\alpha\Lambda)^{1-\frac2p}\big\}\,\mathsf S_p\Big)^{\frac p4\,(1-\theta)}\,\nrm\psi p\,.
\end{multline*}\end{cor}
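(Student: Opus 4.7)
The plan is to obtain the inequality as a weighted geometric mean of two ingredients: Proposition~\ref{T1-gen} itself, and a Gagliardo--Nirenberg-type inequality in which the kinetic energy $\nrm{\nabla_{\!\mathbf A}\psi}2^2$ is replaced by the full quantity $N:=\nrm{\nabla_{\!\mathbf A}\psi}2^2+\alpha\,\nrm\psi2^2$ that appears on the left-hand side of the claim.

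The first step is to establish the magnetic scale-invariant inequality
\[
N^{\theta_0}\,\nrm\psi 2^{2(1-\theta_0)}\ge M\,\mathsf S_p\,\nrm\psi p^2,\qquad\theta_0:=\tfrac d2\(1-\tfrac2p\),\quad M:=\min\Big\{1,(1+\tfrac\alpha\Lambda)^{\theta_0}\Big\}.
\]
Starting from~\eqref{GN} and the diamagnetic inequality~\eqref{diamagnetic}, one has $\nrm{\nabla_{\!\mathbf A}\psi}2^{2\theta_0}\,\nrm\psi 2^{2(1-\theta_0)}\ge\mathsf S_p\,\nrm\psi p^2$, so it remains to bound $\nrm{\nabla_{\!\mathbf A}\psi}2^2$ above by a multiple of $N$. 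If $\alpha\ge0$, trivially $\nrm{\nabla_{\!\mathbf A}\psi}2^2\le N$. If $\alpha\in(-\Lambda,0)$, I use the spectral-gap bound~\eqref{Gap}, $\Lambda\,\nrm\psi 2^2\le\nrm{\nabla_{\!\mathbf A}\psi}2^2$, which combined with the definition of $N$ yields $N\ge\nrm{\nabla_{\!\mathbf A}\psi}2^2\,(1+\alpha/\Lambda)$ and hence $\nrm{\nabla_{\!\mathbf A}\psi}2^2\le N\,\Lambda/(\Lambda+\alpha)$; raising to the power $\theta_0$ produces the factor $M^{-1}$. In dimension $d=2$ the exponent $\theta_0=1-2/p$ is exactly the one appearing in the statement.

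The second step is interpolation. With $N\ge\mu_{\mathrm{interp}}(\alpha)\,\nrm\psi p^2$ from Proposition~\ref{T1-gen} in hand, fix $\theta\in[\theta_0,1)$ and set $a:=(\theta-\theta_0)/(1-\theta_0)\in[0,1)$ and $b:=1-a=(1-\theta)/(1-\theta_0)\in(0,1]$. Raising Proposition~\ref{T1-gen} to the power $a$, the magnetic scale-invariant inequality of Step~1 to the power $b$, and multiplying, one obtains
\[
\mu_{\mathrm{interp}}(\alpha)^{a}\,(M\,\mathsf S_p)^{b}\,\nrm\psi p^{2}\le N^{a+b\theta_0}\,\nrm\psi 2^{2b(1-\theta_0)}=N^{\theta}\,\nrm\psi 2^{2(1-\theta)},
\]
by the defining choice of $a$ and $b$. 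Taking square roots yields the claim with exponents $a/2$ and $b/2$ on $\mu_{\mathrm{interp}}(\alpha)$ and $M\,\mathsf S_p$. For $d=2$ a direct computation gives $a/2=(p\theta-p+2)/4$, $b/2=p(1-\theta)/4$, and $M=\min\{1,(1+\alpha/\Lambda)^{1-2/p}\}$, matching the statement exactly.

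The essential ingredient is Step~1, where the spectral gap~\eqref{Gap} is used to absorb negative values of $\alpha$ into the multiplicative constant $M$; once the two one-parameter inequalities are aligned, the interpolation of Step~2 is a routine H\"older-type manipulation.
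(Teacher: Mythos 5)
Your proof is correct and follows essentially the same route as the paper's: both combine Proposition~\ref{T1-gen} with the diamagnetic inequality~\eqref{diamagnetic}, the spectral gap~\eqref{Gap}, and the scale-invariant Gagliardo--Nirenberg inequality~\eqref{GN}, and both take a weighted geometric mean with weights $a=(\theta-\theta_\star)/(1-\theta_\star)$ and $b=(1-\theta)/(1-\theta_\star)$. Your "raise to powers $a$, $b$ and multiply" is just a reorganisation of the paper's "decompose the exponent $\theta/2$ and bound each factor separately," and your factor $M$ is exactly the $\min\{1,(1+\alpha/\Lambda)^{1-2/p}\}$ that the paper produces from~\eqref{Gap} and~\eqref{diamagnetic} when $\alpha<0$.
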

%---------------------------------------------------------------------
\begin{proof} With $\theta_\star=1-\frac2p$, we can write
\begin{multline*}
\(\nrm{\nabla_{\!\mathbf A}\psi}2^2+\alpha\,\nrm\psi2^2\)^{\theta/2}\,\nrm\psi2^{1-\theta}\\
=\(\nrm{\nabla_{\!\mathbf A}\psi}2^2+\alpha\,\nrm\psi2^2\)^{\frac12\,\frac{\theta-\theta_\star}{1-\theta_\star}}
\(\(\nrm{\nabla_{\!\mathbf A}\psi}2^2+\alpha\,\nrm\psi2^2\)^{\frac12\(1-\frac2p\)}\nrm\psi2^{\frac2p}\)^{\frac{1-\theta}{1-\theta_\star}}\\
\ge\(\mu_{\mathrm{interp}}(\alpha)\,\nrm\psi p^2\)^{\frac12\,\frac{\theta-\theta_\star}{1-\theta_\star}}\(\(\nrm{\nabla_{\!\mathbf A}\psi}2^2+\alpha\,\nrm\psi2^2\)^{\frac12\(1-\frac2p\)}\nrm\psi2^{\frac2p}\)^{\frac{1-\theta}{1-\theta_\star}}\,.
\end{multline*}
If $\alpha\in(-\Lambda,0]$, it follows from~\eqref{Gap} and~\eqref{diamagnetic} that
\[
\nrm{\nabla_{\!\mathbf A}\psi}2^2+\alpha\,\nrm\psi2^2\ge\big(1+\tfrac\alpha\Lambda\big)\,\nrm{\nabla_{\!\mathbf A}\psi}2^2\ge\big(1+\tfrac\alpha\Lambda\big)\,\nrm{\nabla|\psi|}2^2\,,
\]
while we can simply drop $\alpha\,\nrm\psi2^2$ when $\alpha\ge0$. Hence it follows from~\eqref{GN} that
\begin{multline*}
\(\nrm{\nabla_{\!\mathbf A}\psi}2^2+\alpha\,\nrm\psi2^2\)^{\theta/2}\,\nrm\psi2^{1-\theta}\\
\ge\(\mu_{\mathrm{interp}}(\alpha)\,\nrm\psi p^2\)^{\frac12\,\frac{\theta-\theta_\star}{1-\theta_\star}}\(\min\left\{1,(1+\tfrac\alpha\Lambda)^{\frac12\(1-\frac2p\)}\right\}\,
\mathsf S_p^{1/2}\,\nrm\psi p\)^\frac{1-\theta}{1-\theta_\star}\,,
\end{multline*}
which concludes the proof.\end{proof}

%%%%%%%%%%%%%%%%%%%%%%%%%%%%%%%%%%%%%%%%%%%%%%%%%%%%%%%%%%%%%%%%%%%%%%
\subsection{Case \texorpdfstring{$p\in(\pmin,2)$}{p in(\pmin,2)}}
Let $\nu_{\mathrm{interp}}$ be given by
\[
\nu_{\mathrm{interp}}(\beta):=\left\{\begin{array}{lr}
\mathsf C_p\,\beta^\frac{2\,p}{2\,p+d\,(2-p)}&\mbox{if}\quad\beta\ge\beta_\star:=\(\tfrac{2\,p+d\,(2-p)}{d\,(2-p)}\,\Lambda\,\mathsf C_p^{-1}\)^\frac{2\,p+d\,(2-p)}{2\,p}\,,\\
\Lambda+\beta\,\Lambda^\frac{d\,(p-2)}{2\,p}\,\tfrac{2\,p}{d\,(2-p)}\,\(\tfrac{d\,(2-p)}{2\,p+d\,(2-p)}\)^\frac{2\,p+d\,(2-p)}{2\,p}\,\mathsf C_p^\frac{2\,p+d\,(2-p)}{2\,p}\hspace*{-116pt}&\mbox{if}\quad\beta\in[0,\beta_\star]\,,
\end{array}\right.
\]
where $\mathsf C_p$ denotes the optimal constant in~\eqref{UnScaledGN2}.
%---------------------------------------------------------------------
\begin{prop}\label{T1-gen-qsmall} Let $d=2$ or $3$. Consider a magnetic field $\mathbf B$ with magnetic potential $\mathbf A$ and assume that~\eqref{Gap} holds for some $\Lambda=\Lambda[\mathbf B]>0$. For any $p\in(\pmin,2)$, any $\beta>0$, the function $\nu_{\mathbf B}$ defined in~\eqref{Interp2} satisfies
\[
\nu_{\mathbf B}(\beta)\ge\nu_{\mathrm{interp}}(\beta)\,.
\]\end{prop}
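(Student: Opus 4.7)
The plan is to adapt the argument used for Proposition~\ref{T1-gen} to the range $p\in(1,2)$, substituting the scale-invariant Gagliardo-Nirenberg inequality~\eqref{ScaledGN2} for~\eqref{ScaledGN}. The scheme is again a convex interpolation between the spectral gap inequality~\eqref{Gap} and the diamagnetic inequality~\eqref{diamagnetic}, followed by an optimization over a free parameter $t\in[0,1]$.

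First, for $t\in[0,1]$, I would split the kinetic term as
\[
\nrm{\nabla_{\!\mathbf A}\psi}2^2 = t\,\nrm{\nabla_{\!\mathbf A}\psi}2^2 + (1-t)\,\nrm{\nabla_{\!\mathbf A}\psi}2^2,
\]
apply~\eqref{Gap} to the first piece and~\eqref{diamagnetic} to the second, to obtain
\[
\nrm{\nabla_{\!\mathbf A}\psi}2^2 + \beta\,\nrm\psi p^2 \ge t\,\Lambda\,\nrm\psi2^2 + (1-t)\,\nrm{\nabla|\psi|}2^2 + \beta\,\nrm\psi p^2.
\]
Next I would apply~\eqref{ScaledGN2} to $u=|\psi|$ after factoring $(1-t)$, choosing the scaling parameter so that $\lambda^{2+d(2-p)/p}=\beta/(1-t)$; this produces
\[
(1-t)\,\nrm{\nabla|\psi|}2^2 + \beta\,\nrm\psi p^2 \ge \mathsf C_p\,\beta^{\frac{2\,p}{2\,p+d\,(2-p)}}\,(1-t)^{\frac{d\,(2-p)}{2\,p+d\,(2-p)}}\,\nrm\psi2^2.
\]

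The last step is to maximize, over $t\in[0,1]$, the concave function
\[
g(t):=t\,\Lambda + \mathsf C_p\,\beta^{\frac{2\,p}{2\,p+d\,(2-p)}}\,(1-t)^{\sigma},\qquad \sigma:=\frac{d\,(2-p)}{2\,p+d\,(2-p)}\in(0,1).
\]
A direct differentiation shows that the interior critical point $t_*$ is characterized by $(1-t_*)^{1-\sigma}=\sigma\,\mathsf C_p\,\beta^{2\,p/(2\,p+d\,(2-p))}/\Lambda$ and lies in $(0,1)$ precisely when $\beta<\beta_\star$. Plugging $t_*$ back into $g$ and simplifying using $1/(1-\sigma)=(2\,p+d\,(2-p))/(2\,p)$ produces the linear-in-$\beta$ branch of $\nu_{\mathrm{interp}}(\beta)$; for $\beta\ge\beta_\star$ the maximum is attained at the boundary point $t=0$ and delivers $g(0)=\mathsf C_p\,\beta^{2\,p/(2\,p+d\,(2-p))}$, the other branch.

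The main steps are conceptually identical to those in the proof of Proposition~\ref{T1-gen}; the only real obstacle is algebraic bookkeeping, i.e. matching exponents of $\Lambda$, $\mathsf C_p$ and $\beta$, verifying that $\beta_\star$ is precisely the threshold between the two optimization regimes, and checking that the two branches of $\nu_{\mathrm{interp}}$ join continuously (and in fact smoothly) at $\beta=\beta_\star$.
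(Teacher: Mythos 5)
Your argument matches the paper's proof step for step: the same convex combination of the spectral gap inequality~\eqref{Gap} and the diamagnetic inequality~\eqref{diamagnetic}, followed by an application of~\eqref{ScaledGN2} with exactly the same choice of scaling parameter (your condition $\lambda^{2+d(2-p)/p}=\beta/(1-t)$ is equivalent to the paper's $\lambda^2=(\beta/(1-t))^{2p/(2p+d(2-p))}$), and the same one-dimensional optimization over $t\in[0,1]$ with the threshold $\beta_\star$ distinguishing the interior critical point from the boundary maximum at $t=0$. The algebra identifying the two branches of $\nu_{\mathrm{interp}}$ checks out.
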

%---------------------------------------------------------------------
\begin{proof} For all $\psi\in\mathrm H^1_{\!\mathbf A}$, by~\eqref{Gap} and~\eqref{diamagnetic}, we obtain that
\begin{multline*}
\nrm{\nabla_{\!\mathbf A}\psi}2^2+\beta\,\nrm\psi p^2=t\,\(\nrm{\nabla_{\!\mathbf A}\psi}2^2-\Lambda\,\nrm\psi2^2\)+(1-t)\,\nrm{\nabla_{\!\mathbf A}\psi}2^2+\beta\,\nrm\psi p^2+\Lambda\,t\,\nrm\psi2^2\\\ge(1-t)\,\nrm{\nabla|\psi|}2^2+\beta\,\nrm\psi p^2+\Lambda\,t\,\nrm\psi2^2\,.
\end{multline*}
Next we apply~\eqref{ScaledGN2} to $u=|\psi|$ with $\lambda^2=\(\frac\beta{1-t}\)^\frac{2\,p}{2\,p+d\,(2-p)}$. This yields
\[
\nrm{\nabla_{\!\mathbf A}\psi}2^2+\beta\,\nrm\psi p^2\ge\left[(1-t)^\frac{d\,(2-p)}{2\,p+d\,(2-p)}\,\beta^\frac{2\,p}{2\,p+d\,(2-p)}\,\mathsf C_p+\Lambda\,t\right]\,\nrm\psi2^2\,.
\]
If $\beta\le\beta_\star$, the right hand side is maximal for some explicit $t\in[0,1]$, otherwise the maximum on $[0,1]$ is achieved by $t=0$, which concludes the proof.\end{proof}

By duality the estimates of Proposition~\ref{T1-gen-qsmall} provide a lower estimate for the best constant in the Keller-Lieb-Thirring estimate~\eqref{KLT2}.
%---------------------------------------------------------------------
\begin{cor} Under the assumptions of Proposition~\ref{T1-gen-qsmall}, for any $q=p/(2-p)\in(1,+\infty)$ and any nonnegative potential $W$ such that $W^{-1}\in\mathrm L^q(\R^d)$, we have
\begin{eqnarray*}
&&\lambda_{\mathbf A, W}\ge\nu_{\mathbf B}\(\nrm{W^{-1}}q^{-1}\)\ge\Lambda+\Lambda^{\tfrac{d\,(p-2)}{2\,p}}\,\tfrac{2\,p}{d\,(2-p)}\,\(\tfrac{d\,(2-p)}{2\,p+d\,(2-p)}\,\mathsf C_p\)^\frac{2\,p+d\,(2-p)}{2\,p}\,\nrm{W^{-1}}q^{-1}\\
&&\hspace*{7.8cm}\mbox{if}\quad\nrm{W^{-1}}q^{-1}\in[0,\beta_\star]\,,\\
&&\lambda_{\mathbf A, W}\ge\nu_{\mathbf B}\(\nrm{W^{-1}}q^{-1}\)\ge\mathsf C_p\,\nrm{W^{-1}}q^\frac{-\,2\,p}{2\,p+d\,(2-p)}\quad\mbox{if}\quad\nrm{W^{-1}}q^{-1}\ge\beta_\star\,.
\end{eqnarray*}
\end{cor}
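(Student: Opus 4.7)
The corollary is an immediate chaining of two earlier results. The first inequality $\lambda_{\mathbf A,W}\ge\nu_{\mathbf B}\(\nrm{W^{-1}}q^{-1}\)$ is exactly Corollary~\ref{Cor:KLT1}(ii), obtained there from H\"older's inequality with exponents $2/(2-p)$ and $2/p$ followed by an application of~\eqref{Interp2}, so no additional work is required for that step.

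For the second inequality I would invoke the pointwise lower bound $\nu_{\mathbf B}(\beta)\ge\nu_{\mathrm{interp}}(\beta)$ provided by Proposition~\ref{T1-gen-qsmall} and specialize it to $\beta=\nrm{W^{-1}}q^{-1}$. Reading off the explicit piecewise definition of $\nu_{\mathrm{interp}}$ given just above Proposition~\ref{T1-gen-qsmall} then splits the conclusion into two cases according to whether $\beta$ lies in $[0,\beta_\star]$ or in $[\beta_\star,+\infty)$. In the first regime, $\nu_{\mathrm{interp}}(\beta)$ is affine in $\beta$ with slope $\Lambda^{d(p-2)/(2p)}\,\tfrac{2p}{d(2-p)}\,(\tfrac{d(2-p)}{2p+d(2-p)}\,\mathsf C_p)^{(2p+d(2-p))/(2p)}$ and intercept $\Lambda$, and substituting $\beta=\nrm{W^{-1}}q^{-1}$ yields the first displayed inequality; in the second regime, $\nu_{\mathrm{interp}}(\beta)=\mathsf C_p\,\beta^{2p/(2p+d(2-p))}$ and the same substitution yields the second displayed inequality.

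The only minor algebraic point is to reconcile the grouping used in the definition of $\nu_{\mathrm{interp}}$ with the grouping used in the corollary's statement, namely the identity $(\tfrac{d(2-p)}{2p+d(2-p)})^{\gamma_0}\,\mathsf C_p^{\gamma_0}=(\tfrac{d(2-p)}{2p+d(2-p)}\,\mathsf C_p)^{\gamma_0}$ with $\gamma_0=(2p+d(2-p))/(2p)$; this is mere distributivity of exponentiation over products. Consequently there is no genuine obstacle: the proof reduces to citing Corollary~\ref{Cor:KLT1}(ii) and Proposition~\ref{T1-gen-qsmall} and unpacking the explicit form of $\nu_{\mathrm{interp}}$ in terms of $p$, $d$, $\Lambda$ and $\mathsf C_p$.
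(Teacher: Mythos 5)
Your proposal is correct and coincides with the paper's (implicit) argument: the paper presents this corollary with no separate proof, simply noting that it follows by duality from Proposition~\ref{T1-gen-qsmall} via Corollary~\ref{Cor:KLT1}(ii), exactly as you describe. The algebraic regrouping you flag is the only point of notation to check and you handle it correctly.
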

%---------------------------------------------------------------------

%%%%%%%%%%%%%%%%%%%%%%%%%%%%%%%%%%%%%%%%%%%%%%%%%%%%%%%%%%%%%%%%%%%%%%
\subsection{Proof of \texorpdfstring{Theorem~\ref{Thm:Interp}}{Theorem 1.1}}\label{ProofOfTheorem1}
\begin{proof}[Proof of Theorem~\ref{Thm:Interp}] Let us consider Case (i): $p\in(2,2^*)$. The positivity of the function $\mu_{\mathbf B}$ is a consequence of Proposition~\ref{T1-gen} while the concavity follows from the definition of $\alpha\mapsto\mu_{\mathbf B}(\alpha)$ as the infimum on $\mathrm H^1_{\!\mathbf A}(\R^d)$ of an affine function of $\alpha$. The estimate as $\alpha\to(-\Lambda)_+$ is easily obtained by considering as test function the function $\psi\in\mathrm H^1_{\mathbf A}(\R^d)$ for which there is equality in~\eqref{Gap}. We know from Proposition~\ref{T1-gen} that
\[
\lim_{\alpha\to+\infty}\mu_{\mathbf B}(\alpha)\,\alpha^{\frac{d-2}2-\frac dp}\ge\mathsf C_p\,.
\]
To prove the equality, we take as test function for $\mu_{\mathbf B}(\alpha)$ the function $v_\alpha:=v(\sqrt{\alpha}\,\cdot)$, with $\alpha>0$, where the radial function $v$ realizes the equality in~\eqref{UnScaledGN}. The function $v$ is smooth, positive everywhere and decays like $e^{-|x|}$ as $|x|\to+\infty$. Notice that $v_\alpha$ realizes the equality in~\eqref{ScaledGN} and there is a constant $C>0$ such that \hbox{$v_\alpha(x)\le C\,\exp\big(-\sqrt\alpha\,|x|\big)$} for any $x\in\R^d$. Since $\nrm{\nabla_{\!\mathbf A}v}2^2\le\nrm{\nabla v}2^2+2\,\nrm{\nabla v}2\,\nrm{\mathbf A\,v}2+\nrm{\mathbf A\,v}2^2$, we obtain that
\[
\frac{\nrm{\nabla_{\!\mathbf A}v_\alpha}2^2+\alpha\,\nrm{v_\alpha}2^2}{\alpha^{\frac{2-d}2+\frac dp}\,\nrm{v_\alpha}p^2}\le\mathsf C_p+2\,\sqrt{\mathsf C_p}\,\varepsilon+\varepsilon^2\quad\mbox{with}\quad\varepsilon^2=C^2\,\frac{\ird{|\mathbf A(x)|^2\,e^{-\,2\,\sqrt\alpha\,|x|}}}{\alpha^{\frac{2-d}2+\frac dp}\,\nrm{v_\alpha}p^2}\,.
\]
The result follows from $\alpha^{\frac{2-d}2+\frac dp}\,\nrm{v_\alpha}p^2=\alpha^\frac{2-d}2\,\nrm vp^2$ and~\eqref{hyppp} with $\sigma=2\sqrt\alpha$.

The proof of (ii) is very similar to that of (i). The positivity of the function $\nu_{\mathbf B}$ is a consequence of Proposition~\ref{T1-gen-qsmall} while the concavity follows from the definition of $\beta\mapsto\nu_{\mathbf B}(\beta)$. From Proposition~\ref{T1-gen-qsmall}, we know that
\[
\lim_{\beta\to+\infty}\nu_{\mathbf B}(\beta)\,\beta^{-\frac{2\,p}{2\,p+d\,(2-p)}}\ge\mathsf C_p\,.
\]
To prove the equality, for any $\beta>0$, we take as test function for $\nu_{\mathbf B}(\beta)$ the function
\[
w_\beta(x):=w\(\beta^\frac p{2\,p+d\,(2-p)}\,x\)\quad\forall\,x\in\R^d\,,
\]
where the radial function $w$ realizes the equality in~\eqref{UnScaledGN2}, so that $w_\beta$ realizes the equality in~\eqref{ScaledGN2}. The function $w$ has compact support and can be estimated from above and from below, up to a multiplicative constant, by the characteristic function of centered balls. The same computation as above shows that
\[
\frac{\nrm{\nabla_{\!\mathbf A}w_\beta}2^2+\beta\,\nrm{w_\beta}p^2}{\beta^\frac{2\,p}{2\,p+d\,(2-p)}\,\nrm{w_\beta}2^2}\le\mathsf C_p+2\,\sqrt{\mathsf C_p}\,\varepsilon+\varepsilon^2
\]
with $\varepsilon^2=C^2\,\frac{\ird{|\mathbf A(x)|^2\,\left|w\big(\beta^\frac p{2\,p+d\,(2-p)}\,x\big)\right|^2}}{\beta^\frac{2\,p}{2\,p+d\,(2-p)}\,\nrm{w_\beta}2^2}$. The result follows from
\[
\beta^\frac{2\,p}{2\,p+d\,(2-p)}\,\nrm{w_\beta}2^2=\beta^\frac{(2-d)\,p}{2\,p+d\,(2-p)}\,\nrm w2^2
\]
and~\eqref{hyppp} with $\sigma=\beta^\frac p{2\,p+d\,(2-p)}$.

The case $p=2$ is much simpler. As a straightforward consequence of the Euclidean logarithmic Sobolev inequality~\eqref{LS} and of the diamagnetic inequality~\eqref{diamagnetic}, we know that
\[
\nrm{\nabla_{\!\mathbf A}\psi}2^2\ge\gamma\ird{|\psi|^2\,\log\(\frac{|\psi|^2}{\nrm\psi2^2}\)}+\tfrac d2\,\gamma\,\log\big(\tfrac{\pi\,e^2}\gamma\big)\,\nrm\psi2^2\quad\forall\,\psi\in\mathrm H^1_{\!\mathbf A}(\R^d)\,.
\]
As a consequence, we deduce the existence of a concave function $\xi_{\mathbf B}$ in Inequality~\eqref{KLT3}, such that
\[
\xi_{\mathbf B}\(\gamma\)\ge\tfrac d2\,\gamma\,\log\big(\tfrac{\pi\,e^2}\gamma\big)\quad\forall\,\gamma>0\,.
\]
Note that the r.h.s.~is negative for $\gamma$ large. The function $w_\gamma(x)=(\gamma/\pi)^{d/4}\,e^{-\frac\gamma2\,|x|^2}$ is optimal in~\eqref{LS} and can be used as a test function in~\eqref{Interp3} in the regime as $\gamma\to+\infty$. Using the fact that $\nrm{w_\gamma}2=1$, $\nrm{\nabla w_\gamma}2=\sqrt{d\,\gamma}$ and
\begin{eqnarray*}
\nrm{\nabla_{\!\mathbf A}w_\gamma}2^2&\kern-8pt\le&\kern-8pt\nrm{\nabla w_\gamma}2^2+2\,\nrm{\nabla w_\gamma}2\,\nrm{A\,w_\gamma}2+\nrm{A\,w_\gamma}2^2\\
&\kern-8pt=&\kern-8pt\gamma\,\ird{|w_\gamma|^2\,\log|w_\gamma|^2}+\tfrac d2\,\gamma\,\log\big(\tfrac{\pi\,e^2}\gamma\big)+2\,\nrm{\nabla w_\gamma}2\,\nrm{A\,w_\gamma}2+\nrm{A\,w_\gamma}2^2\,,
\end{eqnarray*}
we get that, for some positive constant $c$,
\begin{multline*}
0\le\nrm{\nabla_{\!\mathbf A}w_\gamma}2^2-\gamma\,\ird{|w_\gamma|^2\,\log|w_\gamma|^2}-\xi_{\mathbf B}\(\gamma\)\\
\le\tfrac d2\,\gamma\,\log\big(\tfrac{\pi\,e^2}\gamma\big)-\xi_{\mathbf B}\(\gamma\)+2\,\sqrt{d\,\gamma}\,\nrm{A\,w_\gamma}2+\nrm{A\,w_\gamma}2^2\\
\le\tfrac d2\,\gamma\,\log\big(\tfrac{\pi\,e^2}\gamma\big)\left[1-\frac{\xi_{\mathbf B}\(\gamma\)}{\tfrac d2\,\gamma\,\log\big(\tfrac{\pi\,e^2}\gamma\big)}-\,\frac{c\,\varepsilon}{\sqrt{\log\big(\tfrac\gamma{\pi\,e^2}\big)}}-\varepsilon^2\right]
\end{multline*}
where $\varepsilon^2=\frac{\gamma^{\frac d2-1}\ird{|\mathbf A(x)|^2\,e^{-\,\gamma\,|x|^2}}}{\tfrac d2\,\log\big(\tfrac\gamma{\pi\,e^2}\big)\,\pi^\frac d2}\to0$ as $\gamma\to+\infty$ according to~\eqref{hyppp}. This establishes that $\xi_{\mathbf B}\(\gamma\)$ is equal to $\tfrac d2\,\gamma\,\log\big(\pi\,e^2/\gamma\big)$ at leading order as $\gamma\to+\infty$.\end{proof}

%%%%%%%%%%%%%%%%%%%%%%%%%%%%%%%%%%%%%%%%%%%%%%%%%%%%%%%%%%%%%%%%%%%%%%
%%%%%%%%%%%%%%%%%%%%%%%%%%%%%%%%%%%%%%%%%%%%%%%%%%%%%%%%%%%%%%%%%%%%%%
\section{Lower estimates: constant magnetic field in dimension two}\label{constant-magnetic-ineq}

In the particular case when the magnetic field is constant, of strength $B>0$, and $d=2$, we can improve the lower estimates of the last section. In this section we assume that $\mathbf B=(0,B)$ and choose the gauge so that
\be{B-d=2}
\mathbf A_1=\tfrac B2x_2\,,\quad\mathbf A_2=-\tfrac B2x_1\quad\forall\,x=(x_1,x_2)\in\R^2\,.
\ee

%%%%%%%%%%%%%%%%%%%%%%%%%%%%%%%%%%%%%%%%%%%%%%%%%%%%%%%%%%%%%%%%%%%%%%
\subsection{A preliminary result}
The next result follows from~\cite[proof of Theorem~3.1]{MR1462758} by Loss and Thaller.
%---------------------------------------------------------------------
\begin{prop}\label{Prop:Loss-Thaller} Consider a constant magnetic field with field strength $B$ in two dimensions. For every $c\in[0,1]$, we have
\[
\irtwo{|\nabla_{\!\mathbf A}\psi|^2}\ge\(1-c^2\)\irtwo{|\nabla\psi|^2}+c\,B\irtwo{\psi^2}\,,
\]
and equality holds with $\psi=u\,e^{iS}$ and $u>0$ if and only if
\be{equlaitycaseLT}
\(-\partial_2u^2,\,\partial_1u^2\)=\frac{2\,u^2}c\,(\mathbf A+\nabla S)\,.
\ee
\end{prop}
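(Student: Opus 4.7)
The plan is to use the polar factorization $\psi = u\,e^{iS}$ with $u = |\psi|\ge 0$, which yields the pointwise identity
\[
|\nabla_{\!\mathbf A}\psi|^2 = |\nabla u|^2 + u^2\,|\mathbf W|^2,\qquad \mathbf W := \mathbf A + \nabla S,
\]
and then to combine a two-dimensional completion of squares with a single integration by parts. The polar decomposition is only classically meaningful on the open set $\{\psi\neq 0\}$; the standard extension to all of $\mathrm H^1_{\!\mathbf A}(\R^2)$ by approximation (or by rewriting in terms of the current $\mathrm{Im}(\overline\psi\,\nabla\psi)$) is a technicality that I will not belabour, since it is the only subtle point of the argument.

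The key algebraic identity, specific to dimension two because it relies on $|\mathbf W^\perp| = |\mathbf W|$, is
\[
c^2\,|\nabla u|^2 + u^2\,|\mathbf W|^2 = \bigl|c\,\nabla u + u\,\mathbf W^\perp\bigr|^2 - 2\,c\,u\,\nabla u\cdot\mathbf W^\perp,
\]
with $\mathbf W^\perp := (-W_2,W_1)$. The cross term is exact: $2\,u\,\nabla u\cdot\mathbf W^\perp = \nabla(u^2)\cdot\mathbf W^\perp$, and since $\operatorname{div}\mathbf W^\perp = -\operatorname{curl}\mathbf W = -\operatorname{curl}\mathbf A$, an integration by parts using the explicit gauge~\eqref{B-d=2} (for which $\operatorname{curl}\mathbf A = -B$) gives $\irtwo{u\,\nabla u\cdot\mathbf W^\perp} = \tfrac12\irtwo{u^2\,\operatorname{curl}\mathbf A} = -\,\tfrac{B}{2}\,\irtwo{u^2}$. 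Substituting produces the clean identity
\[
\irtwo{\bigl(c^2\,|\nabla u|^2 + u^2\,|\mathbf W|^2\bigr)} = \irtwo{\bigl|c\,\nabla u + u\,\mathbf W^\perp\bigr|^2} + c\,B\,\irtwo{u^2}.
\]

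Adding $(1-c^2)\,\irtwo{|\nabla u|^2}$ to both sides, discarding the non-negative squared term, and using $\irtwo{|\nabla_{\!\mathbf A}\psi|^2} = \irtwo{|\nabla u|^2} + \irtwo{u^2\,|\mathbf W|^2}$ deliver the claimed inequality; the terms $\irtwo{|\nabla\psi|^2}$ and $\irtwo{\psi^2}$ in the statement are to be read as $\irtwo{|\nabla u|^2} = \irtwo{|\nabla|\psi||^2}$ and $\irtwo{|\psi|^2}$, in line with the stated equality condition. For the equality case, the above identity forces $c\,\nabla u + u\,\mathbf W^\perp\equiv 0$, which componentwise is $c\,\partial_1 u = u\,W_2$ and $c\,\partial_2 u = -\,u\,W_1$; multiplying by $2\,u$ rearranges to $(-\partial_2 u^2,\,\partial_1 u^2) = \tfrac{2\,u^2}{c}(\mathbf A + \nabla S)$, which is~\eqref{equlaitycaseLT}. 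The only genuine obstacle is keeping the polar-decomposition manipulations rigorous on the nodal set of $\psi$; the algebraic and integration-by-parts content is direct.
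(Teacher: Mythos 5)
Your argument is correct and takes essentially the same route as the paper: both decompose $|\nabla_{\!\mathbf A}\psi|^2 = |\nabla u|^2 + u^2\,|\mathbf A + \nabla S|^2$ with $u=|\psi|$, introduce the $90^\circ$-rotated vector to generate the cross term $\nabla(u^2)\cdot(\mathbf A+\nabla S)^\perp$, and integrate it by parts to produce $c\,B\irtwo{u^2}$. The only difference is one of packaging: the paper chains an arithmetic--geometric mean step with the Cauchy--Schwarz inequality, while you complete the square into $\bigl|c\,\nabla u + u\,\mathbf W^\perp\bigr|^2 \ge 0$, which merges those two pointwise inequalities into a single identity and makes the equality condition $c\,\nabla u + u\,\mathbf W^\perp\equiv 0$ immediate; your reading of the statement's $|\nabla\psi|^2$ and $\psi^2$ as $|\nabla|\psi||^2$ and $|\psi|^2$ is also the intended one, as the paper's own proof and the way it is later invoked in Proposition~\ref{T1-gen-bis} confirm.
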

%---------------------------------------------------------------------
\begin{proof} For every $c\in[0,1]$,
\begin{multline*}
\irtwo{|\nabla_{\!\mathbf A}\psi|^2}=\irtwo{|\nabla u|^2}+\irtwo{|\mathbf A+\nabla S|^2\,u^2}\\
=\(1-c^2\)\irtwo{|\nabla u|^2}+\irtwo{\(c^2\,|\nabla u|^2+|\mathbf A+\nabla S|^2\,u^2\)}\,.
\end{multline*}
An expansion of the square shows that
\[
\irtwo{\(c^2\,|\nabla u|^2+|\mathbf A+\nabla S|^2\,u^2\)}\ge\irtwo{2\,c\,|\nabla u|\,|\mathbf A+\nabla S|\,u}\,,
\]
with equality only if $c\,|\nabla u|=|\mathbf A+\nabla S|\,u$. Next we obtain that
\[
2\,|\nabla u|\,|\mathbf A+\nabla S|\,u=|\nabla u^2|\,|\mathbf A+\nabla S|\ge\(\nabla u^2\)^\perp\cdot(\mathbf A+\nabla S)\,,
\]
where $\(\nabla u^2\)^\perp:=\(-\partial_2u^2,\,\partial_1u^2\)$, and there is equality if and only if
\[
\(-\partial_2u^2,\,\partial_1u^2\)=\gamma\,(\mathbf A+\nabla S)
\]
for some $\gamma$. Since $c\,|\nabla u|=|\mathbf A+\nabla S|\,u$, we have $\gamma=2\,u^2/c$. Integration by parts yields
\[
\irtwo{\(c^2\,|\nabla u|^2+|\mathbf A+\nabla S|^2\,u^2\)}\ge B\,c\irtwo{u^2}\,.
\]
\end{proof}

%%%%%%%%%%%%%%%%%%%%%%%%%%%%%%%%%%%%%%%%%%%%%%%%%%%%%%%%%%%%%%%%%%%%%%
\subsection{Case \texorpdfstring{$p\in(2,+\infty)$}{p in(2,infty)}}
%---------------------------------------------------------------------
\begin{prop}\label{T1-gen-bis} Consider a constant magnetic field with field strength $B$ in two dimensions. Given any $p\in(2,+\infty)$, and any $\alpha>-B$, we have
\be{estt1}
\mu_{\mathbf B}(\alpha)\ge\mathsf C_p\(1-c^2\)^{1-\frac2p}\,(\alpha+c\,B)^{\frac2p}=:\mu_{\kern1pt\mathrm{LT}}(\alpha)\,,
\ee
with
\be{cpeta}
c=c(p,\eta)=\frac{\sqrt{\eta^2+p-1}-\eta}{p-1}=\frac1{\eta+\sqrt{\eta^2+p-1}}\in(0,1)
\ee
and $\eta=\alpha\,(p-2)/(2\,B)$.\end{prop}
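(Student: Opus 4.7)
My plan is to combine the refined Loss-Thaller lower bound in Proposition~\ref{Prop:Loss-Thaller} with the scaled Gagliardo-Nirenberg inequality~\eqref{ScaledGN}, thereby producing a one-parameter family of lower estimates on $\mu_{\mathbf B}(\alpha)$ indexed by $c \in (0,1)$. The value $c(p,\eta)$ in~\eqref{cpeta} will then emerge as the explicit maximizer of this family.

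Concretely, I first fix $c \in [0,1]$, apply Proposition~\ref{Prop:Loss-Thaller}, and add $\alpha\,\nrm\psi2^2$ to both sides, which gives
\[
\nrm{\nabla_{\!\mathbf A}\psi}2^2 + \alpha\,\nrm\psi2^2 \;\ge\; (1-c^2)\,\nrm{\nabla|\psi|}2^2 + (\alpha + c\,B)\,\nrm\psi2^2.
\]
Assuming $\alpha + cB > 0$, I next apply~\eqref{ScaledGN} to $u = |\psi|$ in dimension $d = 2$ with the choice $\lambda^2 = (\alpha + cB)/(1-c^2)$; since $2 - d(1-2/p) = 4/p$, this produces
\[
\nrm{\nabla_{\!\mathbf A}\psi}2^2 + \alpha\,\nrm\psi2^2 \;\ge\; \mathsf C_p\,(1-c^2)^{1-2/p}\,(\alpha + cB)^{2/p}\,\nrm\psi p^2,
\]
which is exactly~\eqref{estt1} for this particular value of $c$.

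The substantive work is then to optimize the right-hand side. Setting $f(c) = (1-c^2)^{1-2/p}(\alpha+cB)^{2/p}$ and $(\log f)'(c) = 0$ reduces, after clearing denominators and dividing by $B$, to the quadratic
\[
(p-1)\,c^2 + 2\,\eta\,c - 1 = 0, \qquad \eta = \alpha\,(p-2)/(2\,B),
\]
whose positive root equals $c(p,\eta)$ in both equivalent forms given in~\eqref{cpeta}, using the identity $(\sqrt{\eta^2+p-1}-\eta)(\sqrt{\eta^2+p-1}+\eta) = p-1$. The main bookkeeping obstacle I anticipate is verifying that this critical $c^{*}$ is admissible for every $\alpha > -B$. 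That $c^{*} < 1$ is equivalent to the hypothesis $\alpha > -B$ follows by evaluating $(p-1)c^2 + 2\eta c$ at $c = 1$, which yields $p - 1 + 2\eta$, equal to $1$ precisely at $\alpha = -B$; that $c^{*} B + \alpha > 0$ when $\alpha < 0$ follows by evaluating the quadratic at $c = -\alpha/B$, which gives $\alpha^2/B^2 < 1$, forcing the positive root to exceed $-\alpha/B$. A sign inspection of $f'$ then confirms that $c^{*}$ is a maximum, completing the argument.
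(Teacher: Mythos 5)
Your proposal is correct and follows the same route as the paper: apply Proposition~\ref{Prop:Loss-Thaller} to split off a fraction $1-c^2$ of the kinetic energy, use the diamagnetic reduction to $|\psi|$ together with the scaled Gagliardo-Nirenberg inequality~\eqref{ScaledGN} with $\lambda^2=(\alpha+cB)/(1-c^2)$, and then optimize $c\mapsto(1-c^2)^{1-2/p}(\alpha+cB)^{2/p}$, which leads to the quadratic $(p-1)c^2+2\eta c-1=0$. The admissibility checks you include (that $c^*\in(0,1)$ precisely when $\alpha>-B$, and that $\alpha+c^*B>0$) are a useful bit of extra rigor that the paper leaves implicit, but the argument is the same.
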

%---------------------------------------------------------------------
\begin{proof} For any $\alpha>-B$, $\psi\in\mathrm H^1_{\!\mathbf A}(\R^2)$ and $c\in[0,1]$ such that $\alpha+c\,B\ge0$, we use Proposition~\ref{Prop:Loss-Thaller} to write
\[
\nrm{\nabla_{\!\mathbf A}\psi}2^2+\alpha\,\nrm\psi2^2\ge\(1-c^2\)\irtwo{|\nabla u|^2}+(\alpha+c\,B)\irtwo{u^2}
\]
with $u=|\psi|$. By applying~\eqref{ScaledGN} with $\lambda^2=(\alpha+c\,B)/\big(1-c^2\big)$, we get
\[
\nrm{\nabla_{\!\mathbf A}\psi}2^2+\alpha\,\nrm\psi2^2\ge\mathsf C_p\(1-c^2\)^{1-\frac2p}\,(\alpha+c\,B)^{\frac2p}\,\nrm\psi p^2\,.
\]
Next we optimize the function $c\mapsto\(1-c^2\)^{1-\frac2p}\,(\alpha+c\,B)^{\frac2p}$ in the interval $[0,1]$. This function reaches its maximum at $c$ such that
\[
(p-2)\,c\,(\alpha+c\,B)=B\,(1-c^2)\,.
\]
Notice that $\alpha+c\,B$ is nonnegative. With
\[
\eta=\frac{\alpha\,(p-2)}{2\,B}\,,
\]
the equation for $c$ becomes
\[
(p-1)\,c^2+\,2\,\eta\,c-1=0\,.
\]
which is solved by~\eqref{cpeta}.\end{proof}

%%%%%%%%%%%%%%%%%%%%%%%%%%%%%%%%%%%%%%%%%%%%%%%%%%%%%%%%%%%%%%%%%%%%%%
\subsection{Case \texorpdfstring{$p\in(\pmin,2)$}{p in(\pmin,2)}}
Now let us turn our attention to the case $p\in(\pmin,2)$. The strategy of the proof of Proposition~\ref{T1-gen-bis} applies: for any $c\in(0,1)$, for any $\beta>0$, by applying~\eqref{ScaledGN2} with $\lambda^{4/p}=\beta/(1-c^2)$, we obtain
\[
\nrm{\nabla_{\!\mathbf A}\psi}2^2+\beta\,\nrm\psi p^2\ge\(c\,B+\mathsf C_p\,\beta^\frac p2\,(1-c^2)^{1-\frac p2}\)\,\nrm\psi2^2\,.
\]
The function $c\mapsto c\,B+\mathsf C_p\,\beta^{p/2}\,(1-c^2)^{1-p/2}$ is positive in $[0,1]$ and its derivative is positive at $0_+$, and negative in a neighborhood of $1_-$. The maximum is achieved at the unique point $c_\ast\in(0,1)$ given by
\be{cstar}
\frac {c_\ast}{(1-c_\ast^2)^{p/2}}=\frac B{(2-p)\,\mathsf C_p\,\beta^{p/2}}\,.
\ee
This establishes the following result.
%---------------------------------------------------------------------
\begin{prop}\label{T1-gen-pless23} Consider a constant magnetic field with field strength $B$ in two dimensions. Given any $p\in(\pmin,2)$, and any $\beta>0$, we have
\[\label{estt1pless2}
\nu_{\mathbf B}(\beta)\ge c_\ast\,B+\mathsf C_p\,\beta^\frac p2\,(1-c_\ast^2)^{1-\frac p2}=:\nu_{\kern1pt\mathrm{LT}}(\beta)
\]
with $c_\ast$ given by~\eqref{cstar}.\end{prop}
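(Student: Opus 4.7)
The plan is to mimic the proof of Proposition~\ref{T1-gen-bis} essentially verbatim, swapping out the $p>2$ Gagliardo--Nirenberg inequality \eqref{ScaledGN} for its $p\in(1,2)$ counterpart \eqref{ScaledGN2}. The starting point will be the Loss--Thaller bound of Proposition~\ref{Prop:Loss-Thaller}: for any $c\in[0,1]$ and any $\psi\in\mathrm H^1_{\!\mathbf A}(\R^2)$,
\[
\nrm{\nabla_{\!\mathbf A}\psi}2^2\ge(1-c^2)\,\nrm{\nabla|\psi|}2^2+c\,B\,\nrm\psi 2^2.
\]
Adding $\beta\,\nrm\psi p^2$ to both sides reduces the task to a non-magnetic interpolation estimate for $u:=|\psi|\ge 0$, with $c$ free to be optimized at the end.

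Next I would apply \eqref{ScaledGN2} in dimension $d=2$, which reads $\nrm{\nabla u}2^2+\lambda^{4/p}\,\nrm u p^2\ge\mathsf C_p\,\lambda^2\,\nrm u 2^2$. After multiplying this inequality by $(1-c^2)$ and choosing $\lambda^{4/p}=\beta/(1-c^2)$ so as to absorb the added $\beta\,\nrm\psi p^2$ term, it combines with the Loss--Thaller bound to give
\[
\nrm{\nabla_{\!\mathbf A}\psi}2^2+\beta\,\nrm\psi p^2\ge\bigl(c\,B+\mathsf C_p\,\beta^{p/2}(1-c^2)^{1-p/2}\bigr)\,\nrm\psi 2^2,
\]
which is already of the desired form. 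The remaining step is to maximize the right-hand side over $c\in[0,1]$.

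Let $f(c):=c\,B+\mathsf C_p\,\beta^{p/2}(1-c^2)^{1-p/2}$. I would observe that $f(0)=\mathsf C_p\,\beta^{p/2}>0$, $f'(0)=B>0$, while $f'(c)\to-\infty$ as $c\to1^-$ since $p/2<1$. A short computation shows that $f'(c)=0$ is equivalent to $c/(1-c^2)^{p/2}=B/\bigl((2-p)\,\mathsf C_p\,\beta^{p/2}\bigr)$, which is precisely the defining relation \eqref{cstar} for $c_\ast$. Strict monotonicity of $c\mapsto c\,(1-c^2)^{-p/2}$ on $[0,1)$ (its derivative is a sum of two positive terms) delivers both existence and uniqueness of $c_\ast\in(0,1)$, and makes it the global maximizer of $f$ on $[0,1]$. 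Plugging $c=c_\ast$ into the previous display yields the claimed lower bound on $\nu_{\mathbf B}(\beta)$.

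I do not anticipate any substantial obstacle: the $p>2$ case has already been treated by exactly this scheme, and the Loss--Thaller reduction is agnostic to whether $p>2$ or $p<2$. The only care required is in tracking exponents correctly in the $p<2$ scaling of \eqref{ScaledGN2}, and in confirming that the interior critical point $c_\ast$ lies strictly in $(0,1)$ so that the Loss--Thaller inequality is applied on its range of validity; this is automatic from the boundary behavior of $f'$ described above.
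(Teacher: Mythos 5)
Your proof is correct and follows exactly the strategy of the paper: apply the Loss--Thaller bound of Proposition~\ref{Prop:Loss-Thaller}, combine with the scaled Gagliardo--Nirenberg inequality~\eqref{ScaledGN2} in $d=2$ using the same choice $\lambda^{4/p}=\beta/(1-c^2)$, and optimize the resulting lower bound over $c\in[0,1]$ to arrive at the defining relation~\eqref{cstar} for $c_\ast$. Your verification that $c\mapsto c\,(1-c^2)^{-p/2}$ is strictly increasing is a minor (and welcome) elaboration on the paper's brief remark about the sign of the derivative of $f$ at the endpoints.
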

%---------------------------------------------------------------------

%%%%%%%%%%%%%%%%%%%%%%%%%%%%%%%%%%%%%%%%%%%%%%%%%%%%%%%%%%%%%%%%%%%%%%
\subsection{Logarithmic Sobolev inequality}
By passing to the limit as $p\to2_+$ in~\eqref{estt1}, we obtain a two-dimensional magnetic logarithmic Sobolev inequality.
%---------------------------------------------------------------------
\begin{lem}\label{Lem:LS} Consider a constant magnetic field with field strength $B>0$ in two dimensions. Then for any $\gamma>0$, the best constant in~\eqref{Interp3} satisfies
\be{LS4a}
\xi_{\mathbf B}(\gamma)\ge B\,c\(2,\,\tfrac\gamma B\)+\gamma\,\log\(\tfrac{\pi\,e^2\,c\(2,\,\gamma/B\)}B\)\,,
\ee
where $c(2,\eta):=\sqrt{\eta^2+1}-\eta$.\end{lem}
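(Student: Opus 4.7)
\emph{Proof plan.} The plan is to deduce the lemma by passing to the limit $p\to 2_+$ in the magnetic interpolation inequality~\eqref{estt1} of Proposition~\ref{T1-gen-bis}, mirroring the way in which the Euclidean logarithmic Sobolev inequality is recovered from Gagliardo--Nirenberg in Section~\ref{GN-without-magnetic}. For a given $\gamma>0$, I would set $\alpha=\alpha(p):=2\gamma/(p-2)$, so that the dimensionless parameter $\eta=\alpha\,(p-2)/(2\,B)=\gamma/B$ appearing in~\eqref{cpeta} is \emph{independent of $p$}. Consequently $c(p,\gamma/B)\to c_0:=c(2,\gamma/B)=1/\big(\gamma/B+\sqrt{\gamma^2/B^2+1}\big)$ as $p\to 2_+$, and squaring the relation $c_0\,(\gamma/B+\sqrt{\gamma^2/B^2+1})=1$ produces the key algebraic identity $1-c_0^2=2\,c_0\,\gamma/B$.

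For any sufficiently nice $\psi\in\mathrm H^1_{\!\mathbf A}(\R^2)$ with $\nrm\psi2=1$, combining~\eqref{Interp1} with~\eqref{estt1} and rearranging yields
\[
\nrm{\nabla_{\!\mathbf A}\psi}2^2\ge\mu_{\kern1pt\mathrm{LT}}(\alpha)\,\big(\nrm\psi p^2-1\big)+\big(\mu_{\kern1pt\mathrm{LT}}(\alpha)-\alpha\big).
\]
Using the Taylor expansion $\nrm\psi p^2=1+\tfrac{p-2}{2}\irtwo{|\psi|^2\log|\psi|^2}+o(p-2)$ as $p\to 2_+$, the proof is reduced to establishing the two asymptotics $\mu_{\kern1pt\mathrm{LT}}(\alpha)\,(p-2)/2\to\gamma$ and $\mu_{\kern1pt\mathrm{LT}}(\alpha)-\alpha\to B\,c_0+\gamma\log(\pi e^2 c_0/B)$.

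Setting $\varepsilon:=p-2$, I would then expand $\mu_{\kern1pt\mathrm{LT}}(\alpha)=\mathsf C_p\,(1-c^2)^{1-2/p}\,(\alpha+c\,B)^{2/p}$ using~\eqref{Cp2} with $d=2$ and $2/p=1-\varepsilon/(2+\varepsilon)$. Since $\alpha+cB=2\gamma/\varepsilon+O(1)$ and $\alpha\,\varepsilon/2=\gamma$, a direct computation gives
\[
(\alpha+cB)^{2/p}=(\alpha+cB)+\gamma\log\varepsilon-\gamma\log(2\gamma)+o(1),
\]
while $\mathsf C_p\,(1-c^2)^{1-2/p}=1+\tfrac\varepsilon2\big(-\log\varepsilon+\log(\pi e^2(1-c_0^2))\big)+o(\varepsilon)$. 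Multiplying, the divergent $\log\varepsilon$ contributions cancel, and substituting the identity $1-c_0^2=2\,c_0\,\gamma/B$ produces the announced limit of $\mu_{\kern1pt\mathrm{LT}}(\alpha)-\alpha$; the asymptotic $\mu_{\kern1pt\mathrm{LT}}(\alpha)(p-2)/2\to\gamma$ follows at once from $\mu_{\kern1pt\mathrm{LT}}(\alpha)=\alpha+O(1)$. Letting $p\to 2_+$ in the displayed inequality, and extending by density, produces~\eqref{Interp3} with constant $B\,c_0+\gamma\log(\pi e^2 c_0/B)$, which by definition of $\xi_{\mathbf B}$ is precisely the claimed bound~\eqref{LS4a}. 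The delicate point is the bookkeeping of the $\varepsilon\log\varepsilon$ corrections: because $\alpha$ is itself of order $1/\varepsilon$, the seemingly negligible $\tfrac\varepsilon2\log\varepsilon$ term in $\mathsf C_p$ contributes a finite amount after multiplication by $\alpha$, and it must cancel exactly with the matching $\log\varepsilon$ coming from $(\alpha+cB)^{2/p}$ for the limit to be finite.
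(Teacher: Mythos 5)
Your proposal is correct and follows essentially the same route as the paper: both pass to the limit $p\to 2_+$ in Proposition~\ref{T1-gen-bis} with $\alpha=2\gamma/(p-2)$ (equivalently $\eta=\gamma/B$ fixed), expand $\mathsf C_p\,(1-c^2)^{1-2/p}\,(\alpha+cB)^{2/p}-\alpha$ using~\eqref{Cp2}, and invoke the identity $1-c_0^2=2\,c_0\,\gamma/B$. Your version merely normalizes $\nrm\psi2=1$ before taking the limit; the paper keeps $\nrm\psi2$ general and attaches the $O(1)$ term to $\nrm\psi p^2$, but the two bookkeepings are algebraically identical.
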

%---------------------------------------------------------------------
\begin{proof} By using~\eqref{Cp2} with $d=2$ and~\eqref{cpeta}, we see that for any $\eta>0$,
\begin{multline*}
\mathsf C_p\,(1-c^2)^{1-\frac2p}\,\big(\tfrac{2\,\eta\,B}{p-2}+c\,B\big)^{\frac2p}-\tfrac{2\,\eta\,B}{p-2}\\
=\tfrac{2\,\eta\,B}\varepsilon\,\Big[\Big(1-\,\tfrac\varepsilon2\,\log\varepsilon+\,\tfrac\varepsilon2\,\log\big(\pi\,e^2\big)\Big)\(1+\,\tfrac\varepsilon2\,\log\(1-c^2\)\)\\
\left.\cdot\(1+\,\tfrac\varepsilon2\,\tfrac c\eta\)\(1-\,\tfrac\varepsilon2\,\log\(\tfrac{2\,\eta\,B}\varepsilon\)\)-1\right]+o(\varepsilon)\\
\to B\,\left[c(2,\eta)+\eta\,\log\(\tfrac{\pi\,e^2\,c(2,\eta)}B\)\right]
\end{multline*}
as $\varepsilon=p-2\to0_+$, because $1-c(2,\eta)^2=2\,\eta\,c(2,\eta)$. By rewriting~\eqref{estt1} with $\alpha=\tfrac{2\,\eta\,B}{p-2}$ as
\[
\nrm{\nabla_{\!\mathbf A}\psi}2^2\ge\tfrac{2\,\eta\,B}{p-2}\(\nrm\psi p^2-\nrm\psi2^2\)+\left[\mathsf C_p\,(1-c^2)^{1-\frac2p}\,\big(\tfrac{2\,\eta\,B}{p-2}+c\,B\big)^{\frac2p}-\tfrac{2\,\eta\,B}{p-2}\right]\,\nrm\psi p^2
\]
we can pass to the limit as $p\to2_+$ and establish~\eqref{LS4a} by setting $\gamma=\eta\,B$.\end{proof}

It turns out that the above magnetic logarithmic Sobolev inequality is optimal. To identify the minimizers, we observe that the magnetic Schr\"odinger operator is not invariant under the standard translations. For any $\mathbf b=(b_1,b_2)\in\R^2$,
\[
\nabla_{\!\mathbf A}\psi=(\nabla_{\!\mathbf A}\phi)(x-\mathbf b)\quad\mbox{if}\quad\phi(x-\mathbf b)=e^{-i\,B\,(b_1\,x_2-b_2\,x_1)/2}\,\psi(x)\quad\forall\,x\in\R^2
\]
and $-\Delta_{\mathbf A}$ commutes with the \emph{magnetic translations} \hbox{$\psi\mapsto e^{i\,B\,(b_1\,x_2-b_2\,x_1)/2}\,\psi(x-\mathbf b)$} if $\mathbf A$ is given by~\eqref{B-d=2}.
%---------------------------------------------------------------------
\begin{prop}\label{Prop:LS} Consider a constant magnetic field with field strength $B>0$ in two dimensions. Then the logarithmic Sobolev inequality~\eqref{Interp3} holds with
\[
\xi_{\mathbf B}(\gamma)=B\,c\(2,\,\tfrac\gamma B\)+\gamma\,\log\(\tfrac{\pi\,e^2\,c\(2,\,\gamma/B\)}B\)
\]
where $c(2,\eta):=\sqrt{\eta^2+1}-\eta$, and the optimizer is given, up to a multiplication by a complex constant and a magnetic translation, by $\psi(x)=e^{-\gamma\,|x|^2/4}$.
\end{prop}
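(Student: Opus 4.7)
Lemma~\ref{Lem:LS} provides the lower bound, so the plan is to exhibit a Gaussian test function that realizes equality and then to characterize all extremals through the equality cases of the ingredients used in the lower-bound proof.

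For the matching upper bound I would plug a centered Gaussian into~\eqref{Interp3}, the natural candidate being either $\psi_0(x)=e^{-\gamma|x|^2/4}$ or its rescaled variant $e^{-B|x|^2/(4c)}$ motivated by the equality condition~\eqref{equlaitycaseLT} of Proposition~\ref{Prop:Loss-Thaller} with $c=c(2,\gamma/B)$. Since such a $\psi_0$ is real and $\mathbf A$ is the symmetric gauge~\eqref{B-d=2}, we have $|\nabla_{\!\mathbf A}\psi_0|^2=|\nabla\psi_0|^2+|\mathbf A|^2|\psi_0|^2$, and all three quantities $\nrm{\psi_0}{2}^2$, $\nrm{\nabla_{\!\mathbf A}\psi_0}{2}^2$ and $\irtwo{|\psi_0|^2\log(|\psi_0|^2/\nrm{\psi_0}{2}^2)}$ reduce to standard Gaussian moments in $\R^2$. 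Using the defining identity $1-c^2=2c\,\gamma/B$ for $c=c(2,\gamma/B)$, the resulting ratio in~\eqref{Interp3} collapses to the claimed $Bc+\gamma\log(\pi e^2 c/B)$ and pins down $\xi_{\mathbf B}(\gamma)$.

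For uniqueness, if $\psi=u\,e^{iS}$ saturates~\eqref{Interp3}, then it must simultaneously saturate (a) Proposition~\ref{Prop:Loss-Thaller} at the optimal value $c=c(2,\gamma/B)$ (the one at which the lower bound coming from the combination is maximal) and (b) the Euclidean logarithmic Sobolev inequality~\eqref{LS} applied to $u$. Condition (a) is the pointwise constraint~\eqref{equlaitycaseLT} coupling $\nabla u^2$, $\mathbf A$ and $\nabla S$, while (b), via the classical characterization of extremals in~\cite{MR1132315}, forces $u$ to be a translate of a Gaussian. Solving~\eqref{equlaitycaseLT} with such a $u$ then fixes $S(x)=\tfrac B2(b_1 x_2-b_2 x_1)+\mathrm{const}$, which is exactly the phase generated by the magnetic translation $\psi(x)\mapsto e^{iB(b_1x_2-b_2x_1)/2}\psi_0(x-\mathbf b)$. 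Invariance of~\eqref{Interp3} under magnetic translations (because $-\Delta_{\mathbf A}$ commutes with them) and under multiplication by a unit complex scalar accounts for the full family of extremals.

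The main obstacle is the compatibility check in the uniqueness argument: one must verify that the Gaussian width forced by~\eqref{equlaitycaseLT} coincides with the one demanded by equality in~\eqref{LS}, or equivalently that taking the curl of~\eqref{equlaitycaseLT} is consistent with $\mathrm{curl}\,\mathbf A=B$. Both reduce to the quadratic relation $c^2+2c\,\gamma/B-1=0$ defining $c(2,\gamma/B)$, so once this is recognised, the uniqueness part becomes a bookkeeping exercise rather than a new analytical ingredient.
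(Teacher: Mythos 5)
Your strategy is the same as the paper's: saturate Proposition~\ref{Prop:Loss-Thaller} and the Euclidean inequality~\eqref{LS} (applied at a free scale $\sigma$) simultaneously, then optimize over $c$; the paper presents exactly this argument in a slightly more compact way, identifying the extremal directly through the equality conditions rather than through a separate test-function computation. Two points are worth making precise. First, of the two Gaussians you propose, only $u(x)=e^{-B|x|^2/(4c)}$ can close the argument: the LS step is applied at the parameter $\sigma=\gamma/(1-c^2)$, which at the optimal $c=c(2,\gamma/B)$ equals $B/(2c)$ (because $1-c^2=2c\gamma/B$), and the LS extremal at parameter $\sigma$ is $e^{-\sigma|x|^2/2}=e^{-B|x|^2/(4c)}$; simultaneously, plugging a Gaussian $e^{-a|x|^2}$ into~\eqref{equlaitycaseLT} forces $a=B/(4c)$, so the two equality conditions pin down the same width --- this is precisely the compatibility check you flag, and it reduces to the quadratic defining $c(2,\gamma/B)$, as you observe. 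Second, be careful when you actually ``collapse'' the ratio: with $\sigma=B/(2c)$ the bound obtained from Loss--Thaller plus LS reads $Bc+\gamma\log(\pi e^2/\sigma)=Bc+\gamma\log(2\pi e^2 c/B)$, and the Gaussian $e^{-B|x|^2/(4c)}$ gives this same value with equality, not $Bc+\gamma\log(\pi e^2 c/B)$. So if you carry the computation through you will find a factor of $2$ inside the logarithm that is absent from the Proposition's statement (and, relatedly, the extremal of~\eqref{LS} as normalized in the paper is $e^{-\gamma|x|^2/2}$, not $e^{-\gamma|x|^2/4}$ as asserted after~\eqref{LS} and in the Proposition); this is worth reconciling before you finalize the write-up.
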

%---------------------------------------------------------------------
In other words, optimizers in inequality~\eqref{Interp3} are of the form
\[
\psi(x)=C\,e^{-\frac\gamma4\frac{|x-\mathbf b|^2}4+\,i\,\frac B2\,(b_1\,x_2-b_2\,x_1)}\quad\forall\,x\in\R^2\,,\quad C\in\mathbb C\,,\quad\mathbf b=(b_1,b_2)\in\R^2\,.
\]
Notice that in the semi-classical regime corresponding to a limit of the magnetic field $\mathbf B$ such that $1/(2\,\eta)=\Lambda=\Lambda[\mathbf B]\to0$, we recover the classical logarithmic Sobolev inequality~\eqref{LS} without magnetic field.

\begin{proof} Using Proposition~\ref{Prop:Loss-Thaller} and Inequality~\eqref{LS}, for all $c\in [0,1]$ we obtain
\[
\irtwo{|\nabla_{\!\mathbf A}\psi|^2}\ge\sigma\,(1-c^2)\irtwo{|\psi|^2\,\log\(\tfrac{|\psi|^2}{\nrm\psi2^2}\)}+\(B\,c+\sigma\,(1-c^2)\,\log\big(\tfrac{\pi\,e^2}\sigma\big)\)\nrm\psi2^2\,.
\]
We recover~\eqref{LS4a} with $\sigma\,(1-c^2)=\gamma$ and $c=c\(2,\,\tfrac\gamma B\)$.

According to Proposition~\ref{Prop:Loss-Thaller}, equality holds if $\psi=u\,e^{iS}$ satisfies~\eqref{equlaitycaseLT} and, simultaneously, $\psi$ realizes the equality case in~\eqref{LS}, \emph{i.e.},
\[
\psi(x)=C\,e^{-\frac\gamma4\,|x-\mathbf b|^2}\quad\forall\,x\in\R^2
\]
with $C\in\mathbb C$ and $\mathbf b\in\R^2$. By~\eqref{equlaitycaseLT}, this means that $S$ has to satisfy
\[
\partial_1 S=-\frac B2\,b_2\,,\quad\partial_2 S=\frac B2\,b_1\,,
\]
which implies $S=\frac B2\,(b_1\,x_2-b_2\,x_1)+D$, for some constant $D$.\end{proof}

%%%%%%%%%%%%%%%%%%%%%%%%%%%%%%%%%%%%%%%%%%%%%%%%%%%%%%%%%%%%%%%%%%%%%%
%%%%%%%%%%%%%%%%%%%%%%%%%%%%%%%%%%%%%%%%%%%%%%%%%%%%%%%%%%%%%%%%%%%%%%
\section{An upper estimate and some numerical results}\label{Sec:Numerics}

In this section, we assume that $d=2$, consider a constant magnetic field, establish a theoretical upper bound, and numerically compute the difference with the lower bounds of Sections~\ref{general-magnetic-ineq} and~\ref{constant-magnetic-ineq}.

%%%%%%%%%%%%%%%%%%%%%%%%%%%%%%%%%%%%%%%%%%%%%%%%%%%%%%%%%%%%%%%%%%%%%%
\subsection{An upper estimate: constant magnetic field in dimension two}\label{Sec:upper}
Let $r=\sqrt{x_1^2+x_2^2}=|x|$ be the radial coordinate associated to any $x=(x_1,x_2)\in\R^2$ and assume that the magnetic potential is given by~\eqref{B-d=2}. For every integer $k\in\N$ we introduce the special symmetry class
\[
\tag{$\mathcal C_k$}
\psi(x)=\(\tfrac{x_2+\,i\,x_1}{|x|}\)^k\,v(|x|)\quad\forall\,x\in\R^2\,.
\]
With this notation, if $\psi\in\mathcal C_k$, then
\[
\frac 1{2\,\pi}\irtwo{|\nabla_{\!\mathbf A}\psi|^2}=\istwo{|v'|^2}+\istwo{\(\tfrac kr-\tfrac{B\,r}2\)^2\,|v|^2}\,.
\]
Let us define
\[
\mathcal Q_\alpha^{(p)}[\psi]:=\tfrac{\nrm{\nabla_{\!\mathbf A}\psi}2^2+\alpha\,\nrm\psi2^2}{\nrm\psi p^2}\quad\mbox{if}\quad p>2\,,\quad\mathcal Q_\beta^{(p)}[\psi]:=\tfrac{\nrm{\nabla_{\!\mathbf A}\psi}2^2+\beta\,\nrm\psi p^2}{\nrm\psi2^2}\quad\mbox{if}\quad p\in(1,2)\,.
\]
The existence of minimizers of $\mathcal Q_\alpha^{(p)}$ in $\mathcal C_k$ was proved in~\cite[Theorem~3.5]{MR1034014} for any $k\in\N$. In the class $\mathcal C_0$, with a slight abuse of notations, we have $\psi=v$ and simple upper estimates can be obtained using $v_\sigma(r)=e^{-\,r^2/(2\,\sigma)}$ as test function:
\[
\nrm{\nabla_{\!\mathbf A}v_\sigma}2^2=\tfrac\pi4\(4+\sigma^2\)\,,\quad\nrm{v_\sigma}2^2=\pi\,\sigma\quad\mbox{and}\quad\nrm{v_\sigma}p^2=\(\tfrac2p\,\pi\,\sigma\)^\frac2p\,.
\]
\emph{Case} (i). Assume first that $p\in(2,+\infty)$ and let $\theta:=2/p$. We observe that
\[
\mathcal Q_\alpha^{(p)}[v_\sigma]=\tfrac18\,(2\,\pi)^{1-\theta}\,p^\theta\,f_{\alpha,\theta}(\sigma)\quad\mbox{where}\quad f_{\alpha,\theta}(\sigma):=\sigma^{\kern1pt2-\theta}+\,4\,\alpha\,\sigma^{1-\theta}+\,4\,\sigma^{-\theta}\,.
\]
The function $f_{\alpha,\theta}$ has a unique minimum on $(0,+\infty)$, which is determined by the second order equation
\[
(2-\theta)\,\sigma^2+\,4\,\alpha\,(1-\theta)\,\sigma-\,4\,\theta=0\,,
\]
namely $\sigma=\sigma_+(\alpha,\theta)$ with
\[
\sigma_+(\alpha,\theta):=2\,\frac{\sqrt{4\,\alpha^2\,(1-\theta)^2+\theta\,(2-\theta)}-\,\alpha\,(1-\theta)}{2-\theta}\,.
\]
With $\theta=2/p$, this gives the estimate
\[
\mathcal Q_\alpha^{(p)}[v_{\sigma_+(\alpha,\theta)}]=\tfrac18\,(2\,\pi)^{1-\theta}\,p^\theta\,f_{\alpha,\theta}\big(\sigma_+(\alpha,\theta)\big)=:\mu_{\mathrm{Gauss}}(\alpha)\,.
\]
\emph{Case} (ii). When $p\in(1,2)$, with $\theta:=\frac2p\in(1,2]$ and $\kappa(\beta,\theta):=8\,\theta^\theta\,\pi^{1-\theta}\,\beta$, we get that
\[
\mathcal Q_\beta^{(p)}[v_\sigma]=\tfrac18\,g_{\beta,\theta}(\sigma)\quad\mbox{where}\quad g_{\beta,\theta}(\sigma):=\sigma+\frac2\sigma+\,\kappa(\beta,\theta)\,\sigma^{\theta-1}\,.
\]
Elementary considerations show that $g_{\beta,\theta}(\sigma)$ has a unique minimum $\sigma=\sigma_-(\beta,\theta)$ determined by the equation
\[
1-\frac2{\sigma^2}+\,\kappa(\beta,\theta)\,(\theta-1)\,\sigma^{\theta-2}=0\,,
\]
which is in general not explicit. However, a simple elimination shows that
\[
\mathcal Q_\beta^{(p)}[v_{\sigma_-(\beta,\theta)}]=\frac18\,g_{\beta,\theta}\big(\sigma_-(\beta,\theta)\big)=\frac18\(\tfrac{2\,\theta}{\theta-1}\,\tfrac1{\sigma_-(\beta,\theta)}+\tfrac{\theta-2}{\theta-1}\,\sigma_-(\beta,\theta)\)=:\nu_{\mathrm{Gauss}}(\beta)\,.
\]
%---------------------------------------------------------------------
\begin{prop}\label{Prop:Upper} With the above notations, we have
\[
\mu_{\mathbf B}(\alpha)\le\mu_{\mathrm{Gauss}}(\alpha)\quad\mbox{if}\quad p>2\quad\mbox{and}\quad
\nu_{\mathbf B}(\beta)\le\nu_{\mathrm{Gauss}}(\beta)\quad\mbox{if}\quad p\in(1,2)\,.
\]
\end{prop}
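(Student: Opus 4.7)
The plan is to exploit the variational characterizations $\mu_{\mathbf B}(\alpha) = \inf_{\psi \in \mathrm H^1_{\!\mathbf A}(\R^2)}\mathcal Q_\alpha^{(p)}[\psi]$ and $\nu_{\mathbf B}(\beta) = \inf_{\psi \in \mathrm H^1_{\!\mathbf A}(\R^2)}\mathcal Q_\beta^{(p)}[\psi]$: any admissible test function produces an upper bound on the optimal constant, so the entire proof reduces to evaluating these quotients on the Gaussian trial family $v_\sigma(r) = e^{-r^2/(2\sigma)}$ (regarded as a radial element of the symmetry class $\mathcal C_0$) and then optimizing over the scale parameter $\sigma > 0$.

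First I would check the three Gaussian computations announced in the paragraph preceding the statement, namely $\|\nabla_{\!\mathbf A} v_\sigma\|_2^2 = \tfrac{\pi}{4}(4 + \sigma^2)$, $\|v_\sigma\|_2^2 = \pi\sigma$, and $\|v_\sigma\|_p^2 = (2\pi\sigma/p)^{2/p}$. The first follows from the radial expression of the magnetic quadratic form in the class $\mathcal C_0$ with $k = 0$, reducing the problem to the elementary Gaussian moment $\int_0^\infty r^3 e^{-r^2/\sigma}\, dr = \sigma^2/2$; the other two are standard one-dimensional Gaussian integrals in the radial variable.

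For Case~(i) $p \in (2,+\infty)$, substitution yields $\mathcal Q_\alpha^{(p)}[v_\sigma] = \tfrac{1}{8}(2\pi)^{1-\theta} p^\theta f_{\alpha,\theta}(\sigma)$ with $\theta = 2/p \in (0,1)$. Since $\sigma^{-\theta}$ dominates as $\sigma \to 0_+$ and $\sigma^{2-\theta}$ dominates as $\sigma \to +\infty$, the function $f_{\alpha,\theta}$ is coercive on $(0,\infty)$ and attains its infimum at an interior critical point. Setting $f_{\alpha,\theta}'(\sigma) = 0$ and clearing the factor $\sigma^{-\theta-1}$ produces the quadratic $(2-\theta)\sigma^2 + 4\alpha(1-\theta)\sigma - 4\theta = 0$, whose roots have product $-4\theta/(2-\theta) < 0$, so there is exactly one positive root, namely $\sigma_+(\alpha,\theta)$. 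Evaluating at this point yields $\mu_{\mathbf B}(\alpha) \le \mathcal Q_\alpha^{(p)}[v_{\sigma_+(\alpha,\theta)}] = \mu_{\mathrm{Gauss}}(\alpha)$.

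Case~(ii) $p \in (1,2)$ proceeds analogously: substitution gives $\mathcal Q_\beta^{(p)}[v_\sigma] = \tfrac{1}{8} g_{\beta,\theta}(\sigma)$ with $\theta = 2/p \in (1,2]$ and $\kappa(\beta,\theta) > 0$, and $g_{\beta,\theta}$ is again coercive on $(0,\infty)$. The critical equation $g_{\beta,\theta}'(\sigma) = 0$, rewritten as $\sigma^2 + \kappa(\beta,\theta)(\theta-1)\sigma^\theta = 2$, has a unique positive solution $\sigma_-(\beta,\theta)$ because the left-hand side is strictly increasing from $0$ to $+\infty$ (this uses $\theta > 1$ and $\kappa > 0$). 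The value $\sigma_-$ has no closed form, but using the critical equation to eliminate $\kappa(\beta,\theta)\sigma^{\theta-1}$ in $g_{\beta,\theta}(\sigma_-)$ yields the stated expression for $\nu_{\mathrm{Gauss}}(\beta)$. There is no genuine obstacle in this proof: it is a Gaussian test-function computation, and the only subtlety is confirming that the critical points $\sigma_\pm$ are global minima, which follows from the coercivity of $f_{\alpha,\theta}$ and $g_{\beta,\theta}$.
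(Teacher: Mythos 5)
Your argument is exactly what the paper intends: Section~5.1 already carries out the Gaussian computations and identifies the critical-point equations, and the proposition is simply the statement that the value of $\mathcal Q^{(p)}_{\alpha}$ (resp.~$\mathcal Q^{(p)}_{\beta}$) at $v_{\sigma_\pm}$ is an admissible upper bound by the variational characterization. Your additions — the sign of the product of the roots of the quadratic in Case~(i), the monotonicity of $\sigma^2+\kappa(\theta-1)\sigma^\theta$ in Case~(ii), and the coercivity of $f_{\alpha,\theta}$ and $g_{\beta,\theta}$ — correctly justify the uniqueness of the optimal $\sigma$, which the paper leaves implicit.
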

%---------------------------------------------------------------------

%%%%%%%%%%%%%%%%%%%%%%%%%%%%%%%%%%%%%%%%%%%%%%%%%%%%%%%%%%%%%%%%%%%%%%
\subsection{Numerical estimates based on Euler-Lagrange equations}\label{Sec:EL}
Instead of a Gaussian test function, one can numerically compute the minimum of $\mathcal Q_\alpha^{(p)}$ in the class $\mathcal C_0$ by solving the corresponding Euler-Lagrange equation.\\
\emph{Case} (i). Assume that $p\in(2,+\infty)$. The equation is
\be{EL}
-\,v''-\frac{v'}r+\(\tfrac{B^2}4\,r^2+\alpha\)v=\mu_{\mathrm{EL}}(\alpha)\(\istwo{|v|^p}\)^{\frac2p-1}\,|v|^{p-2}\,v\,.
\ee
Without loss of generality we can restrict the problem to positive solutions such that
\[
\mu_{\mathrm{EL}}(\alpha)=\(\istwo{|v|^p}\)^{1-\frac2p}
\]
and then we have to solve the reduced problem
\[
-\,v''-\,\frac{v'}r+\(\tfrac{B^2}4\,r^2+\alpha\)v=|v|^{p-2}\,v
\]
among positive functions in $\mathrm H^1((0,+\infty),\,r\,dr)$ such that $\istwo{|v|^2}<+\infty$. From the existence result~\cite[Theorem~3.5]{MR1034014}, we know that $\mu_{\mathrm{EL}}(\alpha)$ is given by the infimum of $\big(\istwo{|v|^p}\big)^{1-2/p}$ on the set of solutions. Uniqueness and nondegeneracy of positive solutions to the above equation has been proved in~\cite{MR2332079} and~\cite{MR3470747}. Numerically, we solve the ODE on a finite interval, which induces a numerical error: the interval has to be chosen large enough, so that the computed value is a good upper approximation of $\mu_{\mathrm{EL}}(\alpha)$.\\
\emph{Case} (ii). Assume that $p\in(1,2)$. A radial minimizer of $\mathcal Q_\alpha^{(p)}$ solves
\[
-\,v''-\frac{v'}r+\tfrac{B^2}4\,r^2\,v=\nu_{\mathrm{EL}}(\beta)\,v-\beta\,\(\istwo{|v|^p}\)^{\frac2p-1}\,|v|^{p-2}\,v\,.
\]
Without loss of generality we can restrict the problem to positive solutions such that
\[
\beta=\(\istwo{|v|^p}\)^{1-\frac2p}
\]
and have therefore to solve the reduced problem
\be{EL2}
-\,v''-\,\frac{v'}r+\tfrac{B^2}4\,r^2\,v=\nu\,v-|v|^{p-2}\,v
\ee
among nonnegative functions in $\mathrm H^1((0,+\infty),\,r\,dr)$ such that $\istwo{|v|^p}<+\infty$. Notice that the \emph{compact support principle} applies according to, \emph{e.g.},~\cite{MR1938658,MR1387457,MR1629650,MR1715341}, since $p-1<1$ so that the nonlinearity in the right hand side of~\eqref{EL2} is non-Lipschitz. Numerically, we can therefore solve~\eqref{EL2} using a shooting method, with a shooting parameter $a=v(0)>0$ that has to be adjusted to provide a nonnegative solution with compact support, which minimizes $\istwo{|v|^p}$. The set of solutions is then parametrized by the parameter $\nu>0$, while $\beta$ is recovered by the above integral condition. In other words, we approximate $\nu\mapsto\beta_{\mathbf B}(\nu)$ and recover $\beta\mapsto\nu_{\mathbf B}(\beta)$ as the inverse of $\beta_{\mathbf B}$. Since we compute the size of the support of the approximated solution, there is no numerical error due to finite size truncation.

%%%%%%%%%%%%%%%%%%%%%%%%%%%%%%%%%%%%%%%%%%%%%%%%%%%%%%%%%%%%%%%%%%%%%%
\subsection{Numerical results}\label{Sec:NumRes}
We illustrate the \emph{Case} (i), $p\in(2,+\infty)$, by computing for $p=3$ and $B=1$, in dimension $d=2$, an approximation of $\alpha\mapsto\mu_{\mathbf B}(\alpha)$. Upper estimates $\mu_{\mathrm{Gauss}}(\alpha)\ge\mu_{\mathrm{EL}}(\alpha)\ge\mu_{\mathbf B}(\alpha)$ and lower estimates $\mu_{\mathrm{interp}}(\alpha)\le\mu_{\kern1pt\mathrm{LT}}(\alpha)\le\mu_{\mathbf B}(\alpha)$ are surprisingly close: see Figs.~\ref{Fig:F1} and~\ref{Fig:F2}.
%---------------------------------------------------------------------
\begin{figure}[ht]
\begin{center}
\includegraphics[width=9cm]{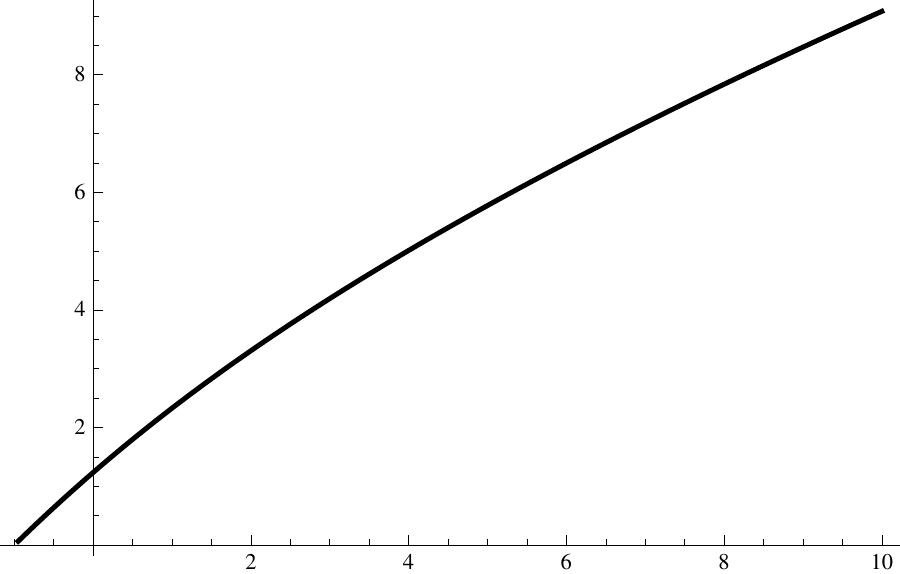}
\caption{\small\label{Fig:F1} Case $d=2$, $p=3$, $B=1$: plot of $\alpha\mapsto(2\,\pi)^{\frac2p-1}\,\mu_{\mathbf B}(\alpha)$.}
\end{center}
\end{figure}
%---------------------------------------------------------------------
\begin{figure}[ht]
\begin{center}
\includegraphics[width=9cm]{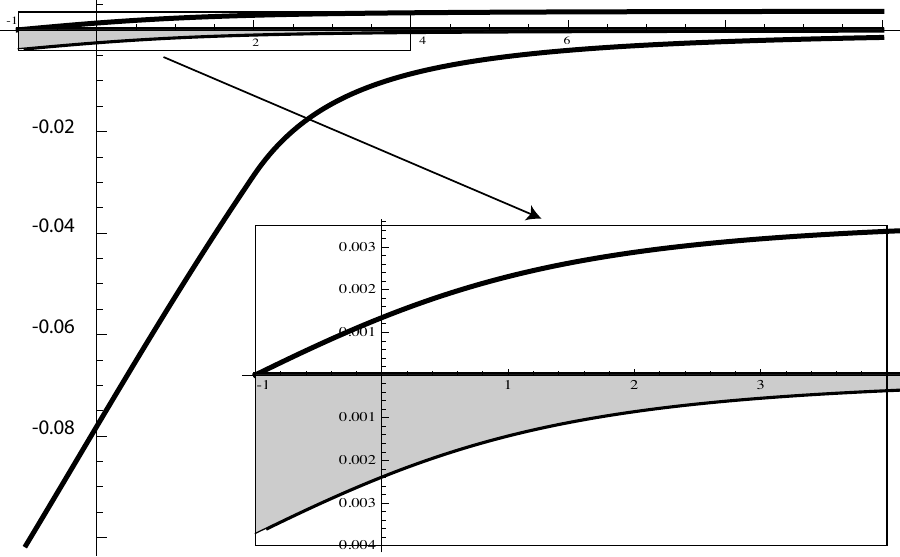}
\caption{\small\label{Fig:F2} Case $d=2$, $p=3$, $B=1$: comparison of the upper estimates $\alpha\mapsto\mu_{\mathrm{Gauss}}(\alpha)$ and $\alpha\mapsto\mu_{\mathrm{EL}}(\alpha)$ of Sections~\ref{Sec:upper} and~\ref{Sec:EL}, with the lower estimates $\alpha\mapsto\mu_{\mathrm{interp}}(\alpha)$ and $\alpha\mapsto\mu_{\kern1pt\mathrm{LT}}(\alpha)$ of Propositions~\ref{T1-gen} and~\ref{T1-gen-bis}. Plots represent the curves $\log_{10}(\mu_{\mathrm{Gauss}}/\mu_{\mathrm{EL}})$, $\log_{10}(\mu_{\kern1pt\mathrm{LT}}/\mu_{\mathrm{EL}})$ and $\log_{10}(\mu_{\mathrm{interp}}/\mu_{\mathrm{EL}})$ so that $\alpha\mapsto\mu_{\mathrm{EL}}(\alpha)$ corresponds to a straight line at level $0$. The exact value associated with $\mu_{\mathbf B}$ lies in the grey area.}
\end{center}
\end{figure}
%---------------------------------------------------------------------

In \emph{Case} (ii), $p\in(1,2)$, the range of the curve $\beta\mapsto\nu_{\mathbf B}(\beta)$ differs from the case $p>2$ but again upper estimates $\nu_{\mathrm{Gauss}}(\beta)\ge\nu_{\mathrm{EL}}(\beta)\ge\nu_{\mathbf B}(\beta)$ and lower estimates $\nu_{\mathrm{interp}}(\beta)\le\nu_{\kern1pt\mathrm{LT}}(\beta)\le\nu_{\mathbf B}(\beta)$ are surprisingly close: see Figs.~\ref{Fig:F3} and~\ref{Fig:F4}.
%---------------------------------------------------------------------
\begin{figure}[ht]
\begin{center}
\includegraphics[width=10cm]{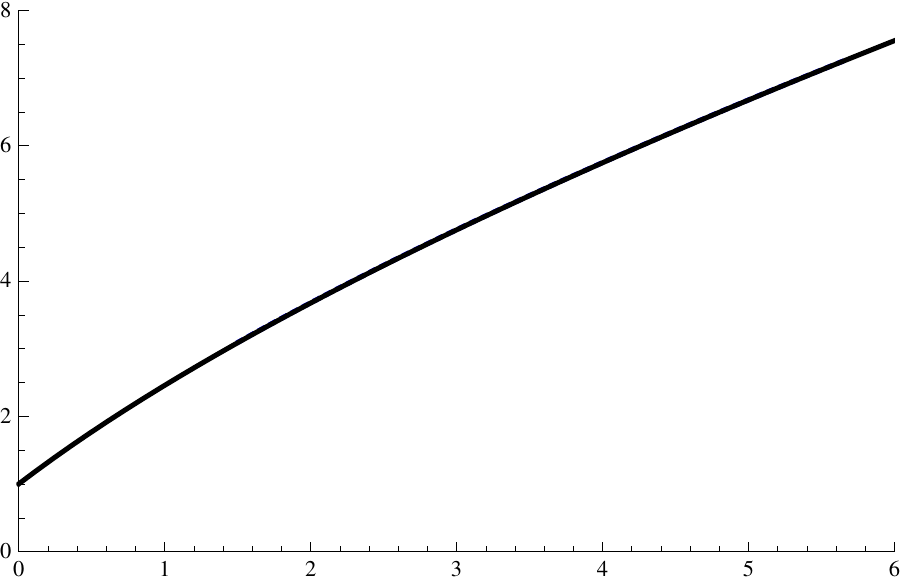}
\caption{\small\label{Fig:F3} Case $d=2$, $p=1.4$, $B=1$: plot of $\beta\mapsto\nu_{\mathbf B}(\beta)$. The horizontal axis is measured in units of $(2\,\pi)^{1-\frac2p}\,\beta$.}
\end{center}
\end{figure}
%---------------------------------------------------------------------
\begin{figure}[ht]
\begin{center}
\includegraphics[width=9cm]{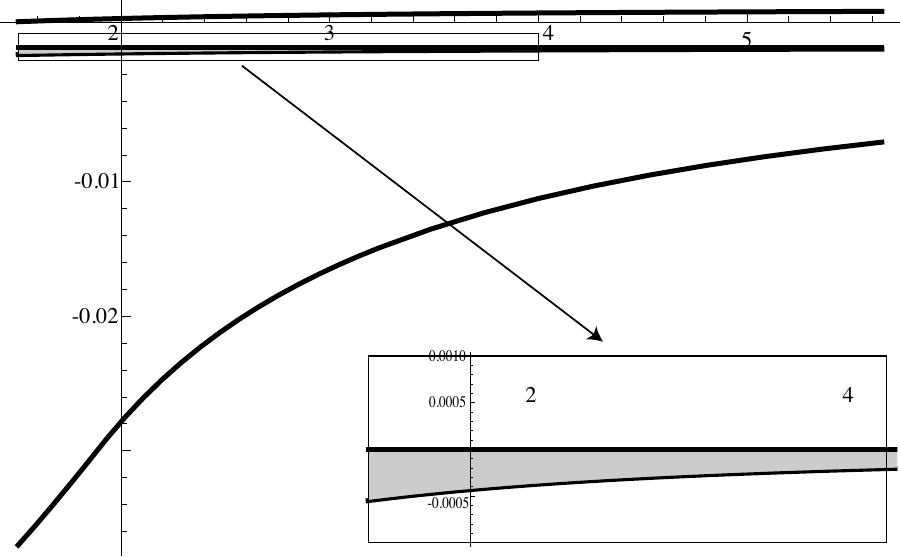}
\caption{\small\label{Fig:F4} Case $d=2$, $p=1.4$, $B=1$, with same horizontal scale as in Fig.~\ref{Fig:F3}: comparison of the upper estimates $\beta\mapsto\nu_{\mathrm{Gauss}}(\beta)$ and $\beta\mapsto\nu_{\mathrm{EL}}(\beta)$ of Sections~\ref{Sec:upper} and~\ref{Sec:EL}, with the lower estimates $\nu_{\mathrm{interp}}(\beta)$ and $\beta\mapsto\nu_{\kern1pt\mathrm{LT}}(\beta)$ of Propositions~\ref{T1-gen-qsmall} and~\ref{T1-gen-pless23}. Plots represent the curves $\log_{10}(\nu_{\mathrm{Gauss}}/\nu_{\mathrm{EL}})$, $\log_{10}(\nu_{\kern1pt\mathrm{LT}}/\nu_{\mathrm{EL}})$ and $\log_{10}(\nu_{\mathrm{interp}}/\nu_{\mathrm{EL}})$ so that $\alpha\mapsto\nu_{\mathrm{EL}}(\beta)$ corresponds to a straight line at level $0$. The exact value associated with $\nu_{\mathbf B}$ lies in the grey area.}
\end{center}
\end{figure}
%---------------------------------------------------------------------

%%%%%%%%%%%%%%%%%%%%%%%%%%%%%%%%%%%%%%%%%%%%%%%%%%%%%%%%%%%%%%%%%%%%%%
\subsection{Asymptotic regimes}\label{Sec:Asymptotic}
We investigate some asymptotic regimes in the case of a constant magnetic field of intensity $B$.

%%%%%%%%%%%%%%%%%%%%%%%%%%%%%%%
\subsubsection*{Convergence towards the Lowest Landau Level} Assume that $d=2$, $p>2$ and let us consider the regime as $\alpha\to(-B)_+$. We denote by \LLL\ the eigenspace corresponding to the \emph{Lowest Landau Level}.
%---------------------------------------------------------------------
\begin{prop} Let $d=2$ and consider a constant magnetic field with field stren\-gth~$B$. If $\psi_\alpha$ is a minimizer for $\mu_{\mathbf B}(\alpha)$ such that $\nrm{\psi_\alpha}p=1$, then there exists a non trivial $\varphi_\alpha\in$ \LLL\ such that
\[
\lim_{\alpha\to(-B)_+}\left\|\psi_\alpha-\varphi_\alpha\right\|_{\mathrm H^1_{\!\mathbf A}(\R^2)}=0\,.
\]
\end{prop}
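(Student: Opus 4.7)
The plan is to spectrally decompose $\psi_\alpha$ against the Landau levels and show that the component orthogonal to the Lowest Landau Level vanishes in $\mathrm H^1_{\!\mathbf A}(\R^2)$ as $\alpha\to(-B)_+$. From the minimality of $\psi_\alpha$ together with the constraint $\nrm{\psi_\alpha}p=1$, I first record the identity
\[
\nrm{\nabla_{\!\mathbf A}\psi_\alpha}2^2+\alpha\,\nrm{\psi_\alpha}2^2=\mu_{\mathbf B}(\alpha).
\]
Testing $\mu_{\mathbf B}$ against any nonzero $\varphi_0\in\mathrm{LLL}$ (e.g.\ the radial Gaussian $\varphi_0(x)=e^{-B\,|x|^2/4}$, which belongs to every $\mathrm L^p(\R^2)$) gives the upper bound $\mu_{\mathbf B}(\alpha)\le(B+\alpha)\,\nrm{\varphi_0}2^2/\nrm{\varphi_0}p^2$, so $\mu_{\mathbf B}(\alpha)=O(B+\alpha)\to 0$ as $\alpha\to(-B)_+$, consistently with Theorem~\ref{Thm:Interp}.

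Next, let $P_0$ denote the $\mathrm L^2$-orthogonal projection onto $\mathrm{LLL}$ and split $\psi_\alpha=\varphi_\alpha+\phi_\alpha$ with $\varphi_\alpha:=P_0\psi_\alpha$ and $\phi_\alpha:=\psi_\alpha-\varphi_\alpha\perp\mathrm{LLL}$. Since $\mathrm{LLL}$ is a reducing subspace of $-\Delta_{\!\mathbf A}$ on which $-\Delta_{\!\mathbf A}$ acts as $B\,\mathrm{Id}$, and since the next Landau level sits at $3B$,
\[
\nrm{\nabla_{\!\mathbf A}\psi_\alpha}2^2=B\,\nrm{\varphi_\alpha}2^2+\nrm{\nabla_{\!\mathbf A}\phi_\alpha}2^2,\qquad\nrm{\nabla_{\!\mathbf A}\phi_\alpha}2^2\ge 3B\,\nrm{\phi_\alpha}2^2.
\]
Substituting in the previous identity yields
\[
(B+\alpha)\,\nrm{\varphi_\alpha}2^2+\nrm{\nabla_{\!\mathbf A}\phi_\alpha}2^2+\alpha\,\nrm{\phi_\alpha}2^2=\mu_{\mathbf B}(\alpha),
\]
and each of the three terms on the left is nonnegative for $\alpha>-B$ since $\nrm{\nabla_{\!\mathbf A}\phi_\alpha}2^2+\alpha\,\nrm{\phi_\alpha}2^2\ge(3B+\alpha)\,\nrm{\phi_\alpha}2^2\ge 2B\,\nrm{\phi_\alpha}2^2$. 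Dropping the remaining terms gives $(3B+\alpha)\,\nrm{\phi_\alpha}2^2\le\mu_{\mathbf B}(\alpha)\to 0$, hence $\nrm{\phi_\alpha}2\to 0$; plugging this back into the identity, $\nrm{\nabla_{\!\mathbf A}\phi_\alpha}2^2\le\mu_{\mathbf B}(\alpha)+|\alpha|\,\nrm{\phi_\alpha}2^2\to 0$. Therefore
\[
\nrm{\psi_\alpha-\varphi_\alpha}{\mathrm H^1_{\!\mathbf A}(\R^2)}^2=\nrm{\phi_\alpha}2^2+\nrm{\nabla_{\!\mathbf A}\phi_\alpha}2^2\longrightarrow 0.
\]

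It remains to establish that $\varphi_\alpha$ is non-trivial. By the diamagnetic inequality~\eqref{diamagnetic}, $\nrm{\nabla|\phi_\alpha|}2\le\nrm{\nabla_{\!\mathbf A}\phi_\alpha}2\to 0$ and $\nrm{|\phi_\alpha|}2=\nrm{\phi_\alpha}2\to 0$, so $|\phi_\alpha|\to 0$ in $\mathrm H^1(\R^2)$; since in dimension two the Sobolev embedding $\mathrm H^1(\R^2)\hookrightarrow\mathrm L^p(\R^2)$ holds for every $p\in[2,\infty)$, we conclude $\nrm{\phi_\alpha}p\to 0$. The triangle inequality then yields $\nrm{\varphi_\alpha}p\ge\nrm{\psi_\alpha}p-\nrm{\phi_\alpha}p=1-o(1)$, so $\varphi_\alpha\not\equiv 0$ for $\alpha$ close enough to $-B$ (and in fact $\nrm{\varphi_\alpha}p\to 1$).

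The main obstacle I anticipate is precisely this last non-triviality step: a priori the $\mathrm L^p$-mass of $\psi_\alpha$ could leak entirely into the excited Landau levels, and what prevents this is the $\mathrm L^p$-normalization combined with the two-dimensional Sobolev embedding accessed through the diamagnetic inequality. Everything else is a direct consequence of the spectral gap between the lowest Landau level and the first excited level.
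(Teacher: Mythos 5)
Your argument is correct and follows essentially the same route as the paper: decompose $\psi_\alpha=\varphi_\alpha+\chi_\alpha$ along the Lowest Landau Level and its orthogonal complement, exploit the $3B$ spectral gap in the complement to show $\nrm{\chi_\alpha}2\to 0$ and then $\nrm{\nabla_{\!\mathbf A}\chi_\alpha}2\to 0$ as $\alpha\to(-B)_+$, using $\mu_{\mathbf B}(\alpha)\to 0$ from Theorem~\ref{Thm:Interp}. You additionally make explicit, via the diamagnetic inequality, the two-dimensional Sobolev embedding, and the constraint $\nrm{\psi_\alpha}p=1$, that the LLL component $\varphi_\alpha$ stays nontrivial in the limit — a detail the paper's proof leaves implicit after showing the remainder tends to zero.
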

%---------------------------------------------------------------------
\begin{proof} Let $\psi_\alpha\in\mathrm H^1_{\!\mathbf A}(\R^2)$ be an optimal function for ~\eqref{Interp1} such that $\nrm{\psi_\alpha}p=1$ and let us decompose it as $\psi_\alpha=\varphi_\alpha+\chi_\alpha$, where $\varphi_\alpha\in $ \LLL\ and $\chi_\alpha$ is in the orthogonal of \LLL. Then, by the orthogonality of $\varphi_\alpha$ and $\chi_\alpha$, we get
\[
\mu_{\mathbf B}(\alpha)\ge (\alpha+B)\,\nrm{\varphi_\alpha}2^2+(\alpha+3B)\,\nrm{\chi_\alpha}2^2\ge(\alpha+3B)\,\nrm{\chi_\alpha}2^2\sim2B\,\nrm{\chi_\alpha}2^2
\]
as $\alpha\to(-B)_+$ because $\nrm{\nabla\chi_\alpha}2^2\ge3B\,\nrm{\chi_\alpha}2^2$. Since $\lim_{\alpha\to(-B)_+}\mu_{\mathbf B}(\alpha)=0$ by Theorem~\ref{Thm:Interp}, this implies that $\lim_{\alpha\to(-B)_+}\nrm{\chi_\alpha}2=0$. On the other hand, we know that
\[
\mu_{\mathbf B}(\alpha)=(\alpha+B)\,\nrm{\varphi_\alpha}2^2+\,\nrm{\nabla_{\!\mathbf A}\,\chi_\alpha}2^2+\alpha\,\nrm{\chi_\alpha}2^2\ge\tfrac23\,\nrm{\nabla_{\!\mathbf A}\,\chi_\alpha}2^2\,,
\]
which concludes the proof.\end{proof}

%%%%%%%%%%%%%%%%%%%%%%%%%%%%%%%
\subsubsection*{Semi-classical regime} Let us consider the small magnetic field regime. We assume that the magnetic potential is given by~\eqref{B-d=2} if $d=2$. In dimension $d=3$, we choose $\mathbf A=\frac B2(-x_2,x_1,0)$ and observe that the constant magnetic field is $\mathbf B=(0,0,B)$, while the spectral gap in~\eqref{Gap} is $\Lambda[\mathbf B]=B$.
%---------------------------------------------------------------------
\begin{prop}\label{Prop:OptCst} Let $d=2$ or $3$ and consider a constant magnetic field $\mathbf B$ of intensity $B$ with magnetic potential $\mathbf A$ and assume that~\eqref{Gap} holds for some \hbox{$\Lambda=\Lambda[\mathbf B]>0$}.
\begin{enumerate}
\item[(i)] For $p\in(2,2^*)$ and for any fixed $\alpha$ and $\mu>0$, we have
\[
\lim_{\varepsilon\to0_+}\mu_{\varepsilon\,\mathbf B}(\alpha)=C_p\,\alpha^{\frac dp-\frac{d-2}2}\quad\mbox{and}\quad\lim_{\varepsilon\to0_+}\alpha_{\varepsilon\,\mathbf B}(\mu)=\(\mathsf C_p^{-1}\,\mu\)^\frac{2\,p}{2\,p-d\,(p-2)}\,.
\]
\item[(ii)] For $p\in(1,2)$ and any fixed $\beta>0$, we have
\[
\lim_{\varepsilon\to0_+}\nu_{\varepsilon\,\mathbf B}(\beta)=C_p\,\beta^\frac{2\,p}{2\,p+d\,(2-p)}\,.
\]
\end{enumerate}\end{prop}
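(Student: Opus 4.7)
The plan is to sandwich $\mu_{\varepsilon \mathbf B}(\alpha)$ and $\nu_{\varepsilon \mathbf B}(\beta)$ between matching lower and upper bounds that both converge to the claimed non-magnetic Gagliardo-Nirenberg values as $\varepsilon \to 0_+$. Lower bounds are supplied directly by Propositions~\ref{T1-gen} and~\ref{T1-gen-qsmall}, while upper bounds come from testing the magnetic Rayleigh quotients against rescaled non-magnetic optimizers.

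\emph{Lower bounds.} Apply Proposition~\ref{T1-gen} with the scaled field $\varepsilon \mathbf B$, so that $\Lambda[\varepsilon \mathbf B] = \varepsilon B \to 0$. For fixed $\alpha > 0$ and $\varepsilon$ small enough, the threshold condition $\alpha \ge \varepsilon B\,(2p-d(p-2))/(d(p-2))$ holds, placing one on the second branch of $\mu_{\mathrm{interp}}$:
\[
\mu_{\varepsilon \mathbf B}(\alpha) \ge \mathsf C_p \, \alpha^{1 - d(p-2)/(2p)} = \mathsf C_p \, \alpha^{d/p - (d-2)/2}.
\]
The same argument applied to Proposition~\ref{T1-gen-qsmall} gives $\nu_{\varepsilon \mathbf B}(\beta) \ge \mathsf C_p \, \beta^{2p/(2p+d(2-p))}$ for fixed $\beta > 0$ and $\varepsilon$ small, since the threshold $\beta_\star$ there is a positive power of $\Lambda[\varepsilon \mathbf B]$ and so vanishes with $\varepsilon$.

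\emph{Upper bounds.} Let $v$ denote the radial non-magnetic optimizer of~\eqref{UnScaledGN} (for $p > 2$) and $w$ the compactly supported optimizer of~\eqref{UnScaledGN2} (for $p < 2$). Define $v_\alpha(x) := v(\sqrt{\alpha}\,x)$ and $w_\beta(x) := w\bigl(\beta^{p/(2p+d(2-p))}\,x\bigr)$, which respectively saturate~\eqref{ScaledGN} and~\eqref{ScaledGN2}. Since these test functions are real, one has the identity $|\nabla_{\!\varepsilon \mathbf A} v_\alpha|^2 = |\nabla v_\alpha|^2 + \varepsilon^2 |\mathbf A|^2 v_\alpha^2$, and hence
\[
\mu_{\varepsilon \mathbf B}(\alpha) \le \mathcal Q^{(p)}_\alpha[v_\alpha] = \mathsf C_p \, \alpha^{d/p - (d-2)/2} + \varepsilon^2 \, \frac{\|\mathbf A\, v_\alpha\|_2^2}{\|v_\alpha\|_p^2},
\]
with an analogous formula for $\nu_{\varepsilon \mathbf B}(\beta)$. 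Since $|\mathbf A(x)|^2 \le (B^2/4)\,|x|^2$ in the symmetric gauge and the weighted norms $\||x|\,v_\alpha\|_2^2$ and $\||x|\,w_\beta\|_2^2$ are finite (Gaussian-type decay of $v$, compact support of $w$) and independent of $\varepsilon$, the magnetic correction is $O(\varepsilon^2)$ and vanishes.

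\emph{Inverse function and main obstacle.} Theorem~\ref{Thm:Interp} guarantees that $\alpha \mapsto \mu_{\varepsilon \mathbf B}(\alpha)$ is strictly monotone increasing and concave, so the pointwise convergence on $\alpha > 0$ (locally uniform, by concavity) transfers to its inverse: $\alpha_{\varepsilon \mathbf B}(\mu) \to \bigl(\mathsf C_p^{-1}\,\mu\bigr)^{2p/(2p - d(p-2))}$ for each fixed $\mu > 0$. No step is genuinely deep; the only bookkeeping issue is checking that $\|\mathbf A\, v_\alpha\|_2^2$ and $\|\mathbf A\, w_\beta\|_2^2$ remain bounded uniformly in $\varepsilon$ for fixed $\alpha$ or $\beta$, which is immediate from the explicit form of the symmetric gauge for a constant magnetic field.
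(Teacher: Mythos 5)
Your proof is correct, but it follows a genuinely different route from the paper's. The paper observes that for a constant magnetic field one has the exact scaling identity $\mu_{\varepsilon\,\mathbf B}(\alpha)=\varepsilon^{\frac dp-\frac{d-2}2}\,\mu_{\mathbf B}\big(\alpha\,\varepsilon^{-1}\big)$ (obtained by substituting $\psi(x)=\chi(\sqrt\varepsilon\,x)$ and using $\sqrt\varepsilon\,\mathbf A(x/\sqrt\varepsilon)=\mathbf A(x)$), together with the analogous identities for $\alpha_{\varepsilon\,\mathbf B}$ and $\nu_{\varepsilon\,\mathbf B}$; the claimed limits then drop out immediately from the large-$\alpha$ and large-$\beta$ asymptotics already established in Theorem~\ref{Thm:Interp} and Corollary~\ref{Cor:KLT1}. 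Your argument instead sandwiches $\mu_{\varepsilon\,\mathbf B}(\alpha)$ directly: the lower bound from Proposition~\ref{T1-gen} once $\varepsilon$ is small enough that the second branch of $\mu_{\mathrm{interp}}$ applies, and an upper bound $\mathsf C_p\,\alpha^{\frac dp-\frac{d-2}2}+O(\varepsilon^2)$ from the fixed (non-magnetic, real) optimizer $v_\alpha$, exploiting $|\nabla_{\!\varepsilon\mathbf A}v_\alpha|^2=|\nabla v_\alpha|^2+\varepsilon^2|\mathbf A|^2v_\alpha^2$ and $|\mathbf A(x)|^2=\tfrac{B^2}4|x|^2$. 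This re-derives, for fixed $\alpha$ and small $\varepsilon$, essentially the same estimate that the paper proves once for fixed $\mathbf B$ and large $\alpha$; the paper's reduction is more economical, while yours gives an explicit $O(\varepsilon^2)$ rate and avoids invoking Corollary~\ref{Cor:KLT1} for the inverse (your locally uniform convergence of concave, strictly increasing functions and the resulting convergence of inverses is a sound standard argument). One small inaccuracy: the extremal $v$ for~\eqref{UnScaledGN} decays like $e^{-|x|}$ (exponentially), not Gaussian as you write, but since only finiteness of $\nrm{|x|\,v_\alpha}2$ is needed, this does not affect the conclusion.
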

%---------------------------------------------------------------------
\begin{proof} Consider any function $\psi\in\mathrm H^1_{\!\mathbf A}(\R^d)$ and for any $\varepsilon>0$ define $\psi(x)=\chi(\sqrt\varepsilon\,x)$. With our standard choice for $\mathbf A$, we have that $\sqrt\varepsilon\,\mathbf A\big(x/\sqrt\varepsilon\big)=\mathbf A(x)$. From
\[
\frac{\nrm{\nabla_{\!\varepsilon\,\mathbf A}\psi}2^2+\alpha\,\nrm\psi2^2}{\nrm\psi p^2}=\varepsilon^{\frac dp-\frac{d-2}2}\,\frac{\nrm{\nabla_{\!\mathbf A}\chi}2^2+\alpha\,\varepsilon^{-1}\,\nrm\chi2^2}{\nrm\chi p^2}\,,
\]
we deduce that
\[
\mu_{\varepsilon\,\mathbf B}(\alpha)=\varepsilon^{\frac dp-\frac{d-2}2}\,\mu_{\mathbf B}\(\alpha\,\varepsilon^{-1}\)\,.
\]
By a similar argument we can easily see that
\[
\alpha_{\varepsilon\,\mathbf B}(\mu)=\varepsilon\,\alpha_{\mathbf B}\(\mu\,\varepsilon^{-\frac{2\,q-d}{2\,q}}\)\quad\mbox{and}\quad\nu_{\varepsilon\,\mathbf B}(\beta)=\varepsilon\,\nu_{\mathbf B}\(\beta\,\varepsilon^{\frac{d-2}2-\frac dp}\)\,.
\]
The conclusion follows by considering the asymptotic regime as $\varepsilon\to0_+$ in Theorem~\ref{Thm:Interp} and in Corollary~\ref{Cor:KLT1}.\end{proof}

%%%%%%%%%%%%%%%%%%%%%%%%%%%%%%%%%%%%%%%%%%%%%%%%%%%%%%%%%%%%%%%%%%%%%%
\subsection{A numerical result on the linear stability of radial optimal functions}
Bonheure et al. show in~\cite{2016arXiv160700170B} that for a fixed $\alpha>0$ and for $\mathbf B$ small enough, the optimal functions for~\eqref{Interp1} are radially symmetric functions, \emph{i.e.}, belong to $\mathcal C_0$. As shown in Proposition~\ref{Prop:OptCst}, this regime is equivalent to the regime as $\alpha\to+\infty$ for a given $\mathbf B$, at least if the magnetic field is constant. On the other hand, the numerical results of Section~\ref{Sec:Numerics} show that $\alpha\mapsto\mu_{\mathbf B}(\alpha)$ is remarkably well approximated from above by functions in $\mathcal C_0$. The approximation from below of Proposition~\ref{T1-gen-pless23}, although not exact, is found to be numerically very close.

This raises the open question of whether, in the case of constant magnetic fields, equality in~\eqref{Interp1} is realized by radial functions for a given constant magnetic field $\mathbf B$ and an arbitrary $\alpha$. As  mentioned in Section~\ref{Sec:EL}, from~\cite{MR2332079,MR3470747}, we know that the branch of solutions in $\mathcal C_0$ is isolated in the class of radial functions. Perturbing these radial solutions in a larger class of functions is natural. Let us analyze the stability of the solutions to~\eqref{EL} under perturbations by functions in $\mathcal C_1$. Assume that $d=2$ and $p>2$. Let us denote by $\psi_0$ a minimizer of $\mathcal Q_\alpha^{(p)}$ on the class $(\mathcal C_0)$ of radial functions, normalized so that, with a standard abuse of notation, $\psi_0(x)=\psi_0(|x|)$ solves
\[
-\,\psi_0''-\,\frac{\psi_0'}r+\(\tfrac{B^2}4\,r^2+\alpha\)\psi_0=|\psi_0|^{p-2}\,\psi_0\,,
\]
and consider the test function
\[
\psi_\varepsilon=\psi_0+\varepsilon\,e^{i\,\theta}\,v
\]
where $v$ is a radial function, depending only on $r=|x|$, and $e^{i\,\theta}=(x_1+i\,x_2)/r$. In the asymptotic regime as $\varepsilon\to0_+$, we have
\begin{multline*}
\irtwo{|\nabla_{\!\mathbf A}\psi_\varepsilon|^2}+\alpha\,\irtwo{|\psi_\varepsilon|^2}-\(\irtwo{|\nabla_{\!\mathbf A}\psi_0|^2}+\alpha\,\irtwo{|\psi_0|^2}\)\\
=\(\irtwo{|\nabla_{\!\mathbf A}v|^2}+\alpha\,\irtwo{|v|^2}\)\varepsilon^2+o(\varepsilon^2)\\
=2\,\pi\istwo{\left[|v'|^2+\(\(\tfrac1r-\tfrac{B\,r}2\)^2+\alpha\)|v|^2\right]}\,\varepsilon^2+o(\varepsilon^2)
\end{multline*}
and
\[
\nrm{\psi_\varepsilon}p^2-\nrm{\psi_0}p^2=2\,\pi\,\tfrac p2\,\nrm{\psi_0}p^{2-p}\(\istwo{|\psi_0|^{p-2}\,v^2}\)\varepsilon^2+o(\varepsilon^2)\,.
\]
Altogether, we obtain
\[\begin{array}{l}
\(\mathcal Q_\alpha^{(p)}[\psi_\varepsilon]-\mu_0(\alpha)\)\nrm{\psi_0}p^2\\
=2\,\pi\left[\irtwo{|v'|^2}+\irtwo{\(\(\tfrac1r\!-\!\tfrac{B\,r}2\)^2\!+\alpha\)|v|^2}-\tfrac p2\istwo{|\psi_0|^{p-2}\,v^2}\right]\varepsilon^2+o(\varepsilon^2)
\end{array}\]
where $\mu_0(\alpha)=\nrm{\psi_0}p^{p-2}=\mathcal Q_\alpha^{(p)}[\psi_0]$. The linear stability of $\psi_0$ with respect to perturbations in $(\mathcal C_1)$ can be recast as the eigenvalue problem
\be{Perturbation}
-\,v''-\,\frac{v'}r+\(\(\tfrac1r-\tfrac{B\,r}2\)^2+\alpha\)v-\tfrac p2\,|\psi_0|^{p-2}\,v=\mu\,v\,.
\ee
The numerical results for $d=2$, $B=1$ and $p=3$ of Fig.~\ref{Fig:F5} suggest that $\mathcal Q_\alpha^{(p)}$ is linearly stable for $\alpha>-B$, not too large. This indicates that $\mu_{\mathrm{EL}}$ is a good candidate for computing the exact value of $\mu_{\mathbf B}$ for arbitrary values of $B$'s.
%---------------------------------------------------------------------
\begin{figure}[ht]
\includegraphics[width=7cm]{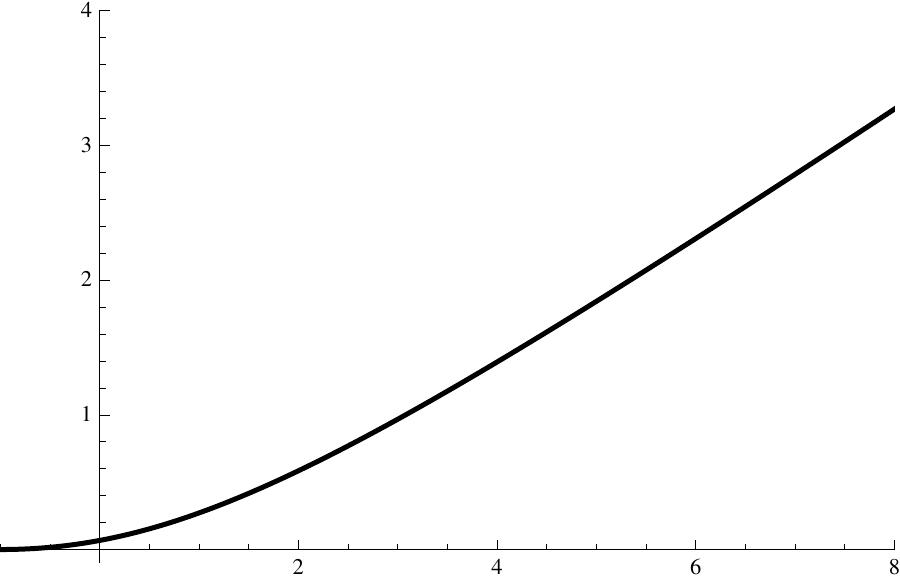}
\caption{\small\label{Fig:F5} Case $p=3$ and $B=1$: plot of $\mu$ solving~\eqref{Perturbation} as a function of $\alpha$. A careful investigation shows that $\mu$ is always positive, including in the limiting case as $\alpha\to(-B)_+$, thus proving the numerical stability of the optimal function in $\mathcal C_0$ with respect to perturbations in $\mathcal C_1$.}
\end{figure}
%---------------------------------------------------------------------

%%%%%%%%%%%%%%%%%%%%%%%%%%%%%%%%%%%%%%%%%%%%%%%%%%%%%%%%%%%%%%%%%%%%%%
%%%%%%%%%%%%%%%%%%%%%%%%%%%%%%%%%%%%%%%%%%%%%%%%%%%%%%%%%%%%%%%%%%%%%%
%\clearpage
%\nocite*
\vspace*{-0.5cm}
%\bibliographystyle{siam}
%\bibliography{DoEsLaLo}

%%%%%%%%%%%%%%%%%%%%%%%%%%%%%%%%%%%%%%%%%%%%%%%%%%%%%%%%%%%%%%%%%%%%%%
%%%%%%%%%%%%%%%%%%%%%%%%%%%%%%%%%%%%%%%%%%%%%%%%%%%%%%%%%%%%%%%%%%%%%%
\end{document}